\documentclass{article}

\usepackage[english]{babel}
\usepackage{amsthm}
\usepackage{amsmath}
\usepackage{amssymb}

\usepackage{graphicx}
\usepackage{enumitem}
\usepackage[colorlinks]{hyperref}
\usepackage{caption}
\usepackage{subcaption}
\usepackage{pict2e,color,colordvi,amscd}
\usepackage{indentfirst}
\usepackage{tikz}
\usepackage{float}
\usetikzlibrary{calc}

\newtheorem{definition}{Definition}[section]
\newtheorem{theorem}[definition]{Theorem}
\newtheorem{lemma}[definition]{Lemma}

\newtheorem{corollary}[definition]{Corollary}

\newtheorem{conjecture}{Conjecture}

\newtheorem{question}{Question}
\newtheorem{observation}{Observation}

\theoremstyle{definition}
\newtheorem{example}{Example}
\newtheorem{remark}{Remark}

\theoremstyle{plain}

\setlist[itemize,enumerate]{itemsep=1pt, topsep=2pt, parsep=0pt}
\newcommand{\Z}{\mathbb Z}
\newcommand{\supp}{\operatorname{supp}}

\title{On the Spectra of Chromatic Number and Chromatic Index of Cyclic Covers}

\author{
Guantao Chen\thanks{Department of Mathematics and Statistics, Georgia State University, Atlanta, GA 30303, USA.
Email addresses: \texttt{\{gchen, hvanderholst, yma29\}@gsu.edu}.}
\and
Hein van der Holst\footnotemark[1]
\and
Rong Luo\thanks{Department of Mathematics, West Virginia University, Morgantown, WV 26506-6310, USA. 
Email address: rluo@mail.wvu.edu.}
\and 
Yuying Ma\footnotemark[1]
}

\begin{document}
\date{\relax}
\maketitle
\begin{abstract}
For a fixed integer $\ell \ge 2$, we study what values of chromatic index and chromatic number can be attained by some $\ell$-fold cyclic cover of a loopless multigraph. For edge-coloring, we first investigate the density, a fundamental lower bound for the chromatic index, and show that the density of every $\ell$-fold cyclic cover of a graph $G$ is at most that of $G$. We further prove that if $\ell$ is even, then the spectrum of chromatic indices
over all $\ell$-fold cyclic covers of $G$ contains every integer between $\Delta(G)$ and $\chi'(G)$. When $\ell$ is odd, the chromatic-index spectrum need not be complete in general; for edge-chromatic critical graphs, we determine exactly which values are attainable. For vertex-coloring, we prove that if $\chi(G)\ge 3$, then the spectrum of chromatic numbers over all $\ell$-fold cyclic covers of $G$ contains every integer between $3$ and $\chi(G)$. Moreover, this spectrum contains $2$ if and only if $G$ is bipartite or $\ell$ is even.

\vspace{0.3cm}
\textit{Keywords:} Cyclic covers, Voltage assignments, Coloring, Critical graphs.
\end{abstract}

\section{Introduction}
All graphs in this paper are finite and loopless, but may have parallel edges, unless otherwise specified. Let $G$ be a graph with vertex set $V(G)$ and edge set $E(G)$. We call $|G|:=|V(G)|$ the \emph{order} of $G$, and $|E(G)|$ the \emph{size} of $G$. We write $\delta(G)$ and $\Delta(G)$ for its \emph{minimum degree} and \emph{maximum degree}, respectively. 
Fix an arbitrary reference orientation of the edges of $G$. For any $e \in E(G)$, let $t(e)$ and $h(e)$ denote the \emph{tail} and \emph{head} of $e$. 
For an integer $\ell \ge 1$, let $\Z_\ell$ be the additive cyclic group of residues modulo $\ell$. A \emph{$\Z_\ell$-voltage assignment} on $G$ is a function $\alpha: E(G) \to \Z_\ell$. When an edge $e$ is considered with the opposite orientation, its voltage is defined to be $-\alpha(e)$. The \emph{support} of $\alpha$ is $\supp(\alpha):=\{e\in E(G) \,\mid\, \alpha(e)\ne 0\}$. 

The \emph{$\ell$-fold cyclic cover} of $G$ derived from a $\Z_\ell$-voltage assignment $\alpha$, denoted by $\widetilde G := \widetilde G(\alpha,\ell)$, is a graph with vertex set $V(\widetilde G) = V(G) \times \Z_\ell$ and edge set $E(\widetilde G) = E(G) \times \Z_\ell$, where, for each $e \in E(G)$ and $i \in \Z_\ell$, the edge $(e,i)$ joins $(t(e),i)$ and $(h(e),i+\alpha(e))$.
We refer to $G$ as the \emph{base graph} of $\widetilde G$. By definition, each vertex $(v,i)$ of $\widetilde G$ has the same degree as $v$ in $G$. Consequently, $\delta(\widetilde G) = \delta(G)$ and $\Delta(\widetilde G) = \Delta(G)$.
Although the construction uses a reference orientation, the resulting cyclic cover is independent of this choice. Therefore, throughout the paper, we fix one arbitrary reference orientation and identify each edge of $G$ with its chosen orientation whenever no confusion arises.

Graph coverings provide a standard way for constructing larger graphs from smaller base graphs while preserving local structure. Voltage graphs, introduced by Gross~\cite{Gross1974} and developed by Gross and Tucker~\cite{GrossTucker1977, GrossTucker1987}, give an algebraic description of graph coverings by assigning group elements to the edges of a base graph. Here we focus on voltage assignments with values in $\Z_\ell$, which produce the $\ell$-fold cyclic covers studied in this paper. The case $\ell = 2$ corresponds to the well-studied setting of double covers of signed graphs. Related work can be found in~\cite{FengKutnarMalnicMarusic2008, Hofmeister1988, Waller1976}.

The chromatic number and chromatic index of graph coverings have been studied in several settings. Amit, Linial, and Matou\v{s}ek~\cite{ALM2002} obtained asymptotic bounds on the chromatic number of random $n$-fold covers of a fixed base graph. Kim and Lee~\cite{KIM2008} introduced a relative chromatic number associated with a graph and one of its spanning subgraphs, and showed that it equals the chromatic number of the corresponding double cover. For edge-coloring, Pisanski, Shawe-Taylor, and Vrabec~\cite{PISANSKI1983} gave sufficient conditions under which a graph bundle has chromatic index equal to its maximum degree. Their framework includes cyclic covers as a special case. Later, Plachta~\cite{plachta2020} studied conditions under which a finite cover of a snark remains a snark. Here we focus on $\ell$-fold cyclic covers of a fixed base graph and investigate how their chromatic number and chromatic index depend on the choice of voltage assignment.

Let $G$ be a graph. A set of pairwise nonadjacent vertices is called an \emph{independent set}, while a set of pairwise nonadjacent edges is called a \emph{matching}. A \emph{proper $k$-vertex-coloring} of $G$ assigns one of $k$ colors to each vertex so that adjacent vertices receive distinct colors. In other words, it partitions $V(G)$ into at most $k$ independent sets. The \emph{chromatic number} of $G$, denoted by $\chi(G)$, is the least integer $k$ for which $G$ admits a proper $k$-vertex-coloring. Similarly, a \emph{proper $k$-edge-coloring} of $G$ assigns one of $k$ colors to each edge so that adjacent edges receive distinct colors. It partitions $E(G)$ into at most $k$ matchings. The \emph{chromatic index} of $G$, denoted by $\chi'(G)$, is the least integer $k$ for which $G$ admits a proper $k$-edge-coloring.

Let $\widetilde G(\alpha,\ell)$ be the cyclic
cover of a graph $G$ derived by an arbitrary $\Z_\ell$-voltage assignment $\alpha$. By definition, $V(\widetilde G(\alpha,\ell))= V(G)\times\Z_\ell$ and $E(\widetilde G(\alpha,\ell)) = E(G)\times\Z_\ell$. For any proper $t$-vertex-coloring of $G$ with color classes $V(G) = I_1\dot\cup \cdots \dot\cup I_t$, the partition $V(\widetilde G(\alpha,\ell)) = (I_1\times \Z_\ell) \dot\cup \cdots\dot\cup (I_t\times \Z_\ell)$ gives a proper $t$-vertex-coloring of $\widetilde G(\alpha,\ell)$. Likewise, for any proper $k$-edge-coloring of $G$ with color classes $E(G) = M_1\dot\cup \cdots \dot\cup M_k$, the partition $E(\widetilde G(\alpha,\ell)) = (M_1\times \Z_\ell)\dot\cup \cdots \dot\cup (M_k\times \Z_\ell)$ gives a proper $k$-edge-coloring of $\widetilde G(\alpha,\ell)$. Together with $\Delta(\widetilde G(\alpha,\ell))= \Delta(G)$, this gives
\[
\chi(\widetilde G(\alpha,\ell))\le \chi(G)
\quad\text{and}\quad
\Delta(G)\le \chi'(\widetilde G(\alpha,\ell))\le \chi'(G).
\]

Fix an integer $\ell \ge 2$ and a graph $G$. This leads to the following
natural questions.
\begin{question}\label{q1}
For which $s \in \{\Delta(G), \Delta(G)+1,\dots, \chi'(G)\}$ does there exist a $\Z_\ell$-voltage assignment $\alpha$ such that $\chi'(\widetilde G(\alpha,\ell)) = s$?
\end{question}
\begin{question}\label{q2}
For which $t \in \{2,\dots,\chi(G)\}$ does there exist a $\Z_\ell$-voltage assignment $\alpha$ such that $\chi(\widetilde G(\alpha,\ell)) = t$? 
\end{question}

The following result shows that, when $\ell$ is even, every value of $s$ in Question~\ref{q1} is attained.
\begin{theorem}\label{thm:edge-coloring-brief}
Let $G$ be a graph. If $\ell$ is even, then for every $s \in \{\Delta(G), \dots, \chi'(G)\}$, there exists a $\Z_\ell$-voltage assignment $\alpha$ such that $\chi'(\widetilde G(\alpha, \ell)) = s$. Moreover, if $s\ne\chi'(G)$, then $\alpha$ may be chosen so that $\chi'(\widetilde G(\alpha', \ell)) > \chi'(\widetilde G(\alpha, \ell))$ for every $\Z_\ell$-voltage assignment $\alpha'$ with $\supp(\alpha') \subsetneq \supp(\alpha)$.
\end{theorem}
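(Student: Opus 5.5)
The plan is a discrete-continuity argument along voltage assignments, with the all-$\ell/2$ assignment as the low endpoint and the zero assignment as the high endpoint.

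\textbf{The low endpoint.} Since $\ell$ is even, let $\beta$ be the assignment with $\beta(e)=\ell/2$ for every $e\in E(G)$. In $\widetilde G(\beta,\ell)$, every edge joins a vertex $(u,i)$ to a vertex $(w,i+\ell/2)$. Hence the vertex set splits into the $\ell/2$ classes $V(G)\times\{i,i+\ell/2\}$, with no edges between different classes. Each class induces a copy of the bipartite double cover $G\times K_2$, which is bipartite with parts $V(G)\times\{i\}$ and $V(G)\times\{i+\ell/2\}$. So $\widetilde G(\beta,\ell)$ is a bipartite multigraph, and K\H{o}nig's edge-coloring theorem gives $\chi'(\widetilde G(\beta,\ell))=\Delta(G)$. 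The zero assignment gives $\ell$ disjoint copies of $G$, so its cover has chromatic index $\chi'(G)$.

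\textbf{Key lemma: one edge moves $\chi'$ by at most one.} If $\alpha,\alpha'$ differ only on one edge $e$, then $|\chi'(\widetilde G(\alpha,\ell))-\chi'(\widetilde G(\alpha',\ell))|\le 1$.
\begin{itemize}
\item Since $G$ is loopless, the lifts $M=\{(e,i): i\in\Z_\ell\}$ in $\widetilde G(\alpha,\ell)$ form a matching. Indeed, their tails $(t(e),i)$ are distinct, their heads are distinct, and $t(e)\neq h(e)$.
\item Likewise the lifts $M'$ of $e$ in $\widetilde G(\alpha',\ell)$ form a matching.
\item We have $\widetilde G(\alpha',\ell)=(\widetilde G(\alpha,\ell)-M)+M'$. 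Deleting edges does not increase $\chi'$, and adding a matching raises $\chi'$ by at most one, using one new color. So $\chi'(\widetilde G(\alpha',\ell))\le\chi'(\widetilde G(\alpha,\ell))+1$.
\item The reverse inequality follows by symmetry.
\end{itemize}

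\textbf{Existence.} Change the voltages from $0$ to $\ell/2$ one edge at a time, passing from the zero assignment to $\beta$. The chromatic index goes from $\chi'(G)$ to $\Delta(G)$ in steps of size at most one, so every integer $s$ in between is attained.

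\textbf{The minimality clause.} Fix $s$ with $\Delta(G)\le s<\chi'(G)$. Among all assignments $\alpha$ with $\chi'(\widetilde G(\alpha,\ell))\le s$ (a nonempty family, since it contains $\beta$), choose one with $|\supp(\alpha)|$ minimum.
\begin{itemize}
\item By minimality, every $\alpha'$ with $\supp(\alpha')\subsetneq\supp(\alpha)$ has $\chi'(\widetilde G(\alpha',\ell))>s$. This is exactly the required property, once we show $\chi'(\widetilde G(\alpha,\ell))=s$.
\item Suppose instead $\chi'(\widetilde G(\alpha,\ell))<s$. The support is nonempty, because the zero assignment has chromatic index $\chi'(G)>s$.
\item Zero out one edge of the support to get $\alpha'$. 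By the lemma, $\chi'(\widetilde G(\alpha',\ell))\le s$. But $\supp(\alpha')\subsetneq\supp(\alpha)$, contradicting minimality.
\end{itemize}
Hence $\chi'(\widetilde G(\alpha,\ell))=s$.

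\textbf{Where the difficulty lies.} The main point needing care is the bipartiteness at the low endpoint, which is where evenness of $\ell$ is used. With multiple edges, König's theorem still applies to bipartite multigraphs, so $\chi'$ equals $\Delta$ there. The remaining steps are routine.
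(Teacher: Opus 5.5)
Your proof is correct, but it takes a different route from the paper's.

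\textbf{The paper's route.} The paper first treats $\ell=2$ constructively. It enlarges $s$ color classes of an optimal coloring of $G$ to an edge-maximal spanning subgraph $H$ with $\chi'(H)=s$. A Kempe-chain argument (Lemma~\ref{lem:double-transfer}) then shows that the double cover with support $E(G)\setminus E(H)$ has chromatic index exactly $s$. This is transferred to even $\ell$ by using voltage $\ell/2$ and Lemma~\ref{lem:subgroup}. The minimality clause is structural: any strictly smaller support leaves some $e\notin E(H)$ at voltage $0$, so every layer contains $H+e$, which is not $s$-edge-colorable.

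\textbf{Your route.} You run a discrete intermediate-value argument between the zero assignment and the all-$\ell/2$ assignment, whose cover is bipartite, so K\H{o}nig gives $\Delta(G)$. The step-by-one control comes from your one-edge lemma, which the paper proves only in its concluding remarks (Observation~\ref{obs:single-edge}). This mirrors the paper's own strategy for vertex colorings. You then obtain the minimality clause from an extremal choice of minimum support size.

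\textbf{What each buys.} Your argument is shorter and uses evenness of $\ell$ only at the bipartite endpoint. It gives a slightly stronger conclusion: no assignment whose support is merely smaller in \emph{size} has chromatic index $\le s$. It also extends verbatim to odd $\ell$ with $\Delta(G)$ replaced by $s_\ell(G)$. The paper's route is explicit instead of existential. Its voltages lie in $\{0,\ell/2\}$, and its support is identified as the complement of an edge-maximal $s$-edge-colorable spanning subgraph. Moreover, Lemma~\ref{lem:double-transfer} shows that every such subgraph works, rather than only that some minimizer exists.
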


For edge-coloring, the maximum degree gives the immediate lower bound $\chi'(G) \ge \Delta(G)$. Vizing's theorem~\cite{Vizing1964} states that $\chi'(G) \le \Delta(G)+\mu(G)$, where $\mu(G)$ denotes the maximum edge multiplicity of $G$. In particular, if $G$ is simple, then $\chi'(G) \le \Delta(G)+1$. For multigraphs, another lower bound for the chromatic index arises from dense subgraphs. Define the \emph{density} of $G$ by
$$\Gamma(G):=\max_{H\subseteq G,\ |H|\ge2}\left\lceil\frac{|E(H)|}{\lfloor |H|/2\rfloor}\right\rceil.$$ 
Since each color class contains at most $\lfloor |H|/2\rfloor$ edges in any subgraph $H\subseteq G$, we have $\chi'(G)\ge \Gamma(G)$, and hence $\chi'(G) \ge \max\{\Delta(G), \Gamma(G)\}$.  The Goldberg-Seymour Conjecture~\cite{Goldberg1973,Seymour1979}, recently confirmed by Chen, Hao, Jing, Yu, and Zang~\cite{CHYZ2024,CJZ2025,J2026}, states that if $\chi'(G) \ge \Delta(G) + 2$, then $\chi'(G) = \Gamma(G)$. Equivalently, $\chi'(G) \le \max\{\Delta(G)+1, \Gamma(G)\}$. We show that every $\ell$-fold cyclic cover of a graph $G$ has density at most $\Gamma(G)$; see Theorem~\ref{thm:density}.

Theorem~\ref{thm:edge-coloring-brief} implies that when $\ell$ is even, there exists a $\Z_\ell$-voltage assignment $\alpha$ such that $\Gamma(\widetilde G(\alpha,\ell)) \le \Delta(G)$. On the other hand, if both $\ell$ and $|G|$ are odd, then every $\ell$-fold cyclic cover $\widetilde G(\alpha,\ell)$ of $G$ satisfies 
$$
\Gamma(\widetilde G(\alpha,\ell)) =\max_{H\subseteq \widetilde G(\alpha,\ell),\ |H|\ge2}\left\lceil\frac{|E(H)|}{\lfloor |H|/2\rfloor}\right\rceil \ge \left\lceil\frac{2\ell |E(G)|}{\ell |G|-1}\right\rceil.
$$ 
If this lower bound exceeds $\Delta(G)$, then $\chi'(\widetilde G(\alpha,\ell))>\Delta(G)$ for every $\Z_\ell$-voltage assignment $\alpha$.
Odd cycles show that this can occur when $\ell$ is odd. Therefore the assumption that $\ell$ is even in Theorem~\ref{thm:edge-coloring-brief} cannot be removed for general graphs.

A graph $G$ is \emph{critical} if $\chi'(H)<\chi'(G)$ for every proper subgraph $H$ of $G$. For such graphs, we determine exactly which values of $s$ in Question~\ref{q1} are attainable when $\ell$ is odd.

\begin{theorem}\label{thm:s-critical-graph}
Let $\ell \ge 3$ be odd, and let $G$ be a critical graph of order $n$ and size $m$ with $\Delta(G)\ge 3$ and $\chi'(G) \ge \Delta(G)+1$. Then, there exists a $\Z_\ell$-voltage assignment $\alpha$ such that $\chi'(\widetilde G(\alpha,\ell)) = s$ if and only if $s\in \{s_0, s_0+1,\dots, \chi'(G)\}$, where $s_0= \min\left\{\chi'(G)-1, \max\left\{\Delta(G), \left\lceil\frac{2\ell m}{\ell n-1}\right\rceil \right\} \right\}$.
\end{theorem}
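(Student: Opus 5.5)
The plan is to prove necessity from the lower bounds already in the paper, and sufficiency from a discrete intermediate-value argument plus one extremal construction. Write $k=\chi'(G)$ and $\widetilde G=\widetilde G(\alpha,\ell)$.

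\emph{Structural facts about $G$.} A critical graph with $\chi'(G)\ge\Delta(G)+1$ is connected. If $k\ge\Delta(G)+2$, the Goldberg--Seymour theorem gives $k=\Gamma(G)$. Criticality then forces the dense subgraph to be $G$ itself, so $n$ is odd and $k=\lceil 2m/(n-1)\rceil$. I will prove this by checking that deleting any edge outside a densest subgraph $H$ cannot lower $\chi'$, since $\Gamma(H)$ already forces $k$. If instead $k=\Delta(G)+1$, then $s_0=\Delta(G)=k-1$ automatically.

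\emph{Necessity.} Let $\alpha$ be any voltage assignment. We have $\chi'(\widetilde G)\ge\Delta(\widetilde G)=\Delta(G)$. When $n$ is odd, $\widetilde G$ has odd order $\ell n$, so taking $H=\widetilde G$ in the definition of density gives
\[
\chi'(\widetilde G)\ge \Gamma(\widetilde G)\ge \left\lceil\frac{2\ell m}{\ell n-1}\right\rceil .
\]
By the structural facts, $n$ is odd whenever $k\ge\Delta(G)+2$. If $n$ is even, then $k=\Delta(G)+1$ and $s_0=\Delta(G)$, so only the bound $\Delta(G)$ is needed. Combined with $\chi'(\widetilde G)\le k$, this shows that every attainable value lies in $\{s_0,\dots,k\}$ whenever $s_0=\max\{\Delta(G),\lceil 2\ell m/(\ell n-1)\rceil\}$. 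When the minimum defining $s_0$ is $k-1$, I still need the separate statement $\chi'(\widetilde G)\ge k-1$. In the case $k\ge\Delta(G)+2$ this follows from the displayed bound once that bound is at least $k-1$. The borderline case $k=\Delta(G)+1$ with $n$ odd and $m$ close to $\Delta(G) n/2$ has to be checked by hand. I will verify there, using $\ell\ge3$ odd together with the overfull arithmetic, that the displayed bound does not exceed $s_0$; this is one place where I expect some care.

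\emph{Sufficiency via one-edge steps.} Let $\alpha$ and $\alpha'$ differ only on a single edge $e$. The $\ell$ lifts $(e,i)$ form a matching $M$ in the cover of $\alpha$, and likewise a matching $M'$ in the cover of $\alpha'$. Both covers contain the common spanning subgraph obtained by deleting these lifts. Adding a matching raises the chromatic index by at most one, so the two chromatic indices differ by at most $1$. Now walk from $\alpha=0$ (index $k$) to an assignment $\alpha^*$ with index $s_0$, changing one edge at a time. Every integer in $[s_0,k]$ is hit along the way. So it remains to construct $\alpha^*$ with $\chi'(\widetilde G(\alpha^*,\ell))\le s_0$.

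\emph{The construction (main obstacle).} I plan to split the construction into two regimes.

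If $s_0=k-1$: choose an edge $e=uv$ and put a nonzero voltage on $e$ only. The cover is then $\ell$ copies of $G-e$ joined by the matching $\{(e,i)\}$. Fix a $(k-1)$-edge-coloring $\varphi$ of $G-e$. Since $G$ is critical, $u$ and $v$ each miss a color, namely $a$ at $u$ and $b$ at $v$, with $a\ne b$. In the cover I will color copy $i$ with $\varphi$ composed with a color permutation $\pi_i$. The permutations are chosen so that the color missing at $(u,i)$ equals the color missing at $(v,i+\alpha(e))$, and that common color is used on $(e,i)$. Since $\ell$ is odd, the cyclic constraint $\pi_{i+\alpha(e)}(b)=\pi_i(a)$ must close up around the cycle. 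I will satisfy it by taking a transposition-type permutation and, where needed, a Kempe $(a,b)$-swap inside a single copy. If that fails, I will spread nonzero voltages on two or three edges to break parity.

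If $s_0<k-1$: here $k\ge\Delta(G)+2$. I will choose $\alpha$ so that every proper subgraph of $\widetilde G$ of odd order is sparse; for instance, take $\alpha$ generic enough that each lift of each cycle of $G$ is long. Then Theorem~\ref{thm:density}, applied together with a local counting argument, should give $\Gamma(\widetilde G)=\max\{\Delta(G),\lceil 2\ell m/(\ell n-1)\rceil\}$ or less. Goldberg--Seymour on $\widetilde G$ then yields
\[
\chi'(\widetilde G)\le\max\{\Delta(G)+1,\Gamma(\widetilde G)\}.
\]
This is at most $s_0$ except when $s_0=\Delta(G)$. That last subcase requires a genuinely $\Delta$-colorable cover, which I would attack by the Kempe/permutation gluing above applied edge by edge. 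I expect controlling $\Gamma$ of all odd subgraphs of the cover, and this $\Delta$ subcase, to be the hardest part.
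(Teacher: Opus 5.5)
\textbf{Gap: the construction when $s_0\le\chi'(G)-2$.}
Your necessity argument is fine. The ``borderline'' worry is moot, since $s_0\le\max\{\Delta(G),\lceil 2\ell m/(\ell n-1)\rceil\}$ by definition. The one-edge walk of Observation~\ref{obs:single-edge} also correctly reduces sufficiency to attaining $s_0$. The genuine gap is your second regime, $s_0\le\chi'(G)-2$, which is the heart of the theorem.
\begin{itemize}
\item Theorem~\ref{thm:density} only gives $\Gamma(\widetilde G)\le\Gamma(G)=\chi'(G)$ here. You give no argument that any choice of $\alpha$, generic or not, makes $|E(\widetilde G[W])|\le s_0(|W|-1)/2$ for every odd $W\subseteq V(\widetilde G)$.
\item Even granting such a bound, Goldberg--Seymour yields only $\chi'(\widetilde G)\le\max\{\Delta(G)+1,\Gamma(\widetilde G)\}$. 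That is useless when $s_0=\Delta(G)$, as you note.
\item ``Gluing edge by edge'' has no mechanism behind it. The single-edge permutation trick uses a $(\chi'(G)-1)$-coloring of $G-e$, so it cannot go below $\chi'(G)-1$. The walk observation only bounds how fast $\chi'$ can change, not that it ever decreases.
\end{itemize}

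\textbf{The missing idea: an explicit layered coloring.}
The paper builds the coloring directly. First you need the exact count $m=(\chi'(G)-1)\frac{n-1}{2}+1$ from Lemma~\ref{lem:structure}; you only derive that $n$ is odd. This count forces every color class of a $(\chi'(G)-1)$-coloring of $G-e$ to be a near-perfect matching. Then, for $H=M_1\cup\dots\cup M_r$ and $F=E(G)\setminus E(H)$:
\begin{itemize}
\item The sets $\overline{\varphi_G}(v)\cap[r]$ partition $[r]$ and have size at least $d_F(v)$. So each $f\in F$ gets private missing colors $a_f,b_f$ at its ends, all distinct across $f$.
\item Put voltage $1$ on $F$ and $0$ on $E(H)$. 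Lemma~\ref{lem:|F|-set} gives $|F|$-subsets $X_0,\dots,X_{\ell-1}\subseteq[r]$ with $X_i\cap X_{i+1}=\emptyset$ if and only if $\ell(r-2|F|)\ge r$.
\item Since $r-2|F|=rn-2m$, this condition is exactly $r\ge\lceil 2\ell m/(\ell n-1)\rceil$.
\item Layer permutations $\pi_i$ with $\pi_i(a_f)=\pi_{i+1}(b_f)\in X_i$ then give a proper $r$-edge-coloring of the cover.
\end{itemize}
So the threshold comes from an odd-cycle packing of color sets, not from a density estimate.

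\textbf{A smaller fix in your first regime.}
Even the single-edge step needs more than $a$ and $b$. With voltage $1$ on $e$, the constraint $\pi_{i+1}(b)=\pi_i(a)$ together with $\pi_i(a)\ne\pi_i(b)$ says that $i\mapsto\pi_i(a)$ properly colors the odd cycle of layers. Transpositions of $\{a,b\}$ and $(a,b)$-Kempe swaps therefore never close up. A third color is required; it is available since $\chi'(G)-1\ge\Delta(G)\ge 3$, and this is exactly where the hypothesis $\Delta(G)\ge3$ enters.
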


Observe that in Theorem~\ref{thm:s-critical-graph}, $\tfrac{2\ell m}{\ell n-1} = \tfrac{2m}{n}\ \tfrac{\ell n}{\ell n -1}$. For fixed $n$, this is asymptotic to the average degree of $G$ as $\ell$ goes to the infinity. Thus when $\ell$ is large enough, $s_0$ is close to $\Delta(G)$.

For chromatic numbers, we give a complete answer to Question~\ref{q2}.
\begin{theorem}\label{thm:vt-any-number-Zell}
Let $\ell \ge 2$ be an integer and let $G$ be a graph. The following hold:
\begin{enumerate}[label=\textup{(\roman*)}]
\item If $\chi(G) \ge 3$, then for every $k \in \{3, 4,\dots,\chi(G)\}$, there exists a $\Z_\ell$-voltage assignment $\alpha$ such that $\chi(\widetilde G(\alpha,\ell))=k$. 
\item There exists a $\Z_\ell$-voltage assignment $\alpha$ such that $\chi(\widetilde G(\alpha, \ell)) = 2$ if and only if either $G$ is bipartite or $\ell$ is even.
\end{enumerate}
\end{theorem}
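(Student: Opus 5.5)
Part (ii) is the easy half, so I will dispose of it first.

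\emph{Part (ii), sufficiency.} If $G$ is bipartite, the zero voltage gives $\ell$ disjoint copies of $G$, which is $2$-colorable. If $\ell$ is even, put $\alpha\equiv 1$ (for the reference orientation) on every edge and colour $(v,i)$ by the parity of $i$; every edge of $\widetilde G(\alpha,\ell)$ joins levels $i$ and $i+1$, and parity is well defined modulo an even $\ell$.

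\emph{Part (ii), necessity.} Suppose $\ell$ is odd and $G$ contains an odd cycle $C$. The preimage of $C$ in $\widetilde G$ is a disjoint union of cycles, each of length $|C|\cdot \ell/d$, where $d=\ell/\gcd(\ell,\text{net voltage of }C)$. Since $\ell$ is odd, $\ell/d$ is odd, so every such cycle has odd length. Hence $\widetilde G$ is not bipartite.

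\emph{Part (i): general strategy.} I use the fact that $\chi(\widetilde G(\alpha,\ell))$ is monotone, with steps of at most one, under a suitable sequence of voltage assignments. The plan is to find two assignments: $\alpha_0=0$, which gives $\chi=\chi(G)$, and some $\alpha_1$ with $\chi(\widetilde G(\alpha_1,\ell))\le 3$. I then pass from $\alpha_0$ to $\alpha_1$ by changing one edge at a time.

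\emph{Key step: single-edge changes move $\chi$ by at most one.} Let $\alpha$ and $\alpha'$ differ only on an edge $e=uv$. Then $\widetilde G(\alpha)-(\{u\}\times\Z_\ell)$ and $\widetilde G(\alpha')-(\{u\}\times\Z_\ell)$ coincide. Take an optimal colouring of $\widetilde G(\alpha)$. The fibre $\{u\}\times\Z_\ell$ is an independent set in both covers (no loops), so recolour the whole fibre with one new colour. This gives $\chi(\widetilde G(\alpha'))\le \chi(\widetilde G(\alpha))+1$, and the reverse inequality holds symmetrically. Therefore along the edge-by-edge path from $\alpha_0$ to $\alpha_1$ the chromatic number takes every integer value between $\chi(\widetilde G(\alpha_1,\ell))$ and $\chi(G)$. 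Combined with the existence of $\alpha_1$ having $\chi\le 3$, this yields all $k\in\{3,\dots,\chi(G)\}$.

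\emph{Main obstacle: constructing $\alpha_1$.} I need a voltage assignment for which the cover is $3$-colourable.

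For $\ell$ even, the parity assignment above already gives $\chi=2$, which is at most $3$.

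For $\ell$ odd I try the following. Fix an ordering $v_1,\dots,v_n$ of $V(G)$, orient each edge from lower to higher index, and set $\alpha(e)=1$ for all edges. Then every edge goes from $(v_a,i)$ to $(v_b,i+1)$ with $a<b$. Colour $(v,i)$ by $i \bmod 2$ for $0\le i\le \ell-2$, and give colour $2$ to all vertices at level $\ell-1$. I check each edge type:
\begin{itemize}
\item An edge between levels $i$ and $i+1$ with $i+1\le \ell-2$ joins different parities.
\item An edge from level $\ell-2$ to level $\ell-1$ ends in colour $2$, while level $\ell-2$ has colour $0$ or $1$.
\item An edge from level $\ell-1$ to level $0$ joins colour $2$ to colour $0$.
\end{itemize}
No edge lies inside a single level, since every edge shifts the level by exactly one. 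So this is a proper $3$-colouring, and by part (ii) the chromatic number here is exactly $3$ when $G$ is non-bipartite.

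The ordering of the vertices turns out not to matter; the only real content is the levelled structure. I will double-check the wraparound case when $\ell=3$ (levels $0,1,2$ receive colours $0,1,2$), where the argument still works.
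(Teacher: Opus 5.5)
Your proof is correct and follows the paper's route. The ingredients are the same: recolor the fibre $\{u\}\times\Z_\ell$ with a new color to get the single-edge bound, interpolate from the all-zero assignment, and use an endpoint cover colored by a proper coloring of $C_\ell$ on the layers. Your all-ones endpoint is the paper's construction with $\varphi_G$ constant, and it suffices because interpolation only needs $\chi\le 3$ at the endpoint, so the paper's step of keeping an odd cycle $H$ inside every layer is not needed.

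One harmless slip in (ii): you swapped the count and the length of the lifted cycles. Each has length $|C|\cdot d$ and there are $\ell/d$ of them, not length $|C|\cdot\ell/d$. Since both factors are odd when $\ell$ is odd, the conclusion is unaffected.
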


The remainder of the paper is organized as follows. In the next section, we introduce additional notation and definitions, and prove Theorem~\ref{thm:density}. In Section~\ref{sec:edge}, we prove Theorems~\ref{thm:edge-coloring-brief} and~\ref{thm:s-critical-graph}. In Section~\ref{sec:vertex}, we show Theorem~\ref{thm:vt-any-number-Zell}. The paper concludes with some remarks.

\section{Preliminaries}\label{sec:prelim}
Let $G$ be a graph. For $X \subseteq V(G)$, let $G[X]$ denote the subgraph of $G$ induced by $X$, that is, the subgraph with vertex set $X$ and edge set consisting of all edges of $G$ whose endpoints both lie in $X$. 
For $F \subseteq E(G)$, let $G[F]$ denote the subgraph of $G$ spanned by $F$, that is, the subgraph with edge set $F$ and vertex set consisting of all endpoints of edges in $F$. Let $G-F$ denote the spanning subgraph of $G$ with edge set $E(G)\setminus F$, and write $G-e$ for $G-\{e\}$. 

Let $\ell \ge 1$ be an integer, and let $\alpha$ be a $\Z_\ell$-voltage assignment. A vertex $(v,i)$ of $\widetilde G(\alpha,\ell)$ is called a \emph{lifted vertex} of $v$, and an edge $(e,i)$ of $\widetilde G(\alpha, \ell)$ is called a \emph{lifted edge} of $e$. For each $i \in \Z_\ell$, the set $V_i:= V(G)\times \{i\}$ is called the \emph{$i$-th layer} of $\widetilde G(\alpha,\ell)$.
For $F \subseteq E(G)$, define $\widetilde F :=\{(e,i) \,\mid\, e\in F,\ i\in\Z_\ell\}$. Then, $|\widetilde{F}| = \ell|F|$. If $F=\{e\}$, we write $\widetilde e$ for $\widetilde F$.
For a subgraph $H$ of $G$, we write $\widetilde H:=\widetilde H\left(\alpha|_{E(H)},\ell\right)$ for the $\ell$-fold cyclic cover of $H$ induced by the restriction of $\alpha$ to $E(H)$.
If $\supp(\alpha)=\emptyset$, we call $\alpha$ the \emph{all-zero voltage assignment}, in which case $\widetilde G(\alpha,\ell)$ is the disjoint union of $\ell$ copies of $G$.

\begin{theorem}\label{thm:density}
Let $G$ be a graph. For every integer $\ell \ge 1$ and every $\Z_\ell$-voltage assignment $\alpha$, the cyclic cover $\widetilde G(\alpha,\ell)$ satisfies $\Gamma(\widetilde G(\alpha, \ell))\le \Gamma(G)$.
\end{theorem}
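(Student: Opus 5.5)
The plan is to bound the number of edges of an arbitrary subgraph of $\widetilde G:=\widetilde G(\alpha,\ell)$ by projecting it onto $G$ and slicing it by fibre size. We show that every subgraph $H'$ of $\widetilde G$ with $|H'|\ge 2$ satisfies $|E(H')|\le \Gamma(G)\lfloor |H'|/2\rfloor$. Taking ceilings then gives $\Gamma(\widetilde G)\le\Gamma(G)$.

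\textbf{Projection to the base graph.} We may assume $H'$ is induced, say $H'=\widetilde G[X]$. For each $v\in V(G)$ let $X_v=\{i\in\Z_\ell : (v,i)\in X\}$ and $a_v=|X_v|$, so $\sum_v a_v=|X|$.
\begin{itemize}
\item Fix an edge $e$ of $G$ with endpoints $u\ne v$ (the graph is loopless). The $\ell$ lifted edges $(e,i)$ form a perfect matching between the fibres $\{u\}\times\Z_\ell$ and $\{v\}\times\Z_\ell$.
\item Hence at most $\min(a_u,a_v)$ lifted edges of $e$ have both ends in $X$.
\item Summing over edges, $|E(H')|\le\sum_{e=uv\in E(G)}\min(a_u,a_v)$, where parallel edges are counted separately.
\end{itemize}

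\textbf{Layer-cake decomposition.} For $t=1,\dots,\ell$ let $S_t=\{v\in V(G): a_v\ge t\}$. We have $\min(a_u,a_v)=\sum_{t\ge1}[u\in S_t][v\in S_t]$. Therefore
\[
\sum_{e=uv}\min(a_u,a_v)=\sum_{t=1}^{\ell}|E(G[S_t])| .
\]
\begin{itemize}
\item If $|S_t|\ge 2$, the definition of $\Gamma(G)$ applied to $G[S_t]$ gives $|E(G[S_t])|\le \Gamma(G)\lfloor |S_t|/2\rfloor$.
\item If $|S_t|\le 1$, then $G[S_t]$ has no edges, so the same bound holds trivially.
\end{itemize}
This yields $|E(H')|\le \Gamma(G)\sum_t\lfloor |S_t|/2\rfloor$.

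\textbf{Recombining the floors.} Since $\sum_t|S_t|=\sum_v a_v=|X|$, superadditivity of floors, $\lfloor x/2\rfloor+\lfloor y/2\rfloor\le\lfloor (x+y)/2\rfloor$, gives $\sum_t\lfloor |S_t|/2\rfloor\le\lfloor |X|/2\rfloor$. Hence $|E(H')|/\lfloor |H'|/2\rfloor\le\Gamma(G)$, and the theorem follows.

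The only step needing care is the decomposition into the level sets $S_t$. It replaces the cover subgraph by a family of genuine subgraphs of $G$ whose orders add up to $|X|$. This is exactly what makes the floor terms combine in the right direction. The remaining steps are routine.
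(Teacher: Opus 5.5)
Your proof is correct and follows the paper's argument essentially step for step. The paper likewise passes to an induced subgraph $\widetilde G[W]$, bounds the number of lifts of $e=uv$ inside $W$ by $\min\{n_u,n_v\}$, and decomposes by level sets $U_r=\{v : n_v\ge r\}$ (your $S_t$). It then applies the definition of $\Gamma(G)$ to each $G[U_r]$ and finishes with $\sum_r\lfloor |U_r|/2\rfloor\le\lfloor |W|/2\rfloor$.
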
 
\begin{proof}
Let $\widetilde G :=\widetilde G(\alpha, \ell)$. Since every subgraph $H \subseteq \widetilde G$ satisfies $|E(H)| \le |E(\widetilde G[V(H)])|$, it suffices to show that 
$$|E(\widetilde G[W])| \le \Gamma(G) \cdot \left\lfloor |W|/2 \right\rfloor$$ 
for every subset $W \subseteq V(\widetilde G)$ with $|W|\ge 2$.

Fix such a subset $W \subseteq V(\widetilde G)$. For each $v\in V(G)$, define $\widetilde V_v:=\{v\}\times\Z_\ell$ and $n_v:=|\widetilde V_v \cap W|$. For each $r \in [\ell]:=\{1,2,\dots,\ell\}$, let $U_r:= \{v \in V(G)\,\mid\, n_v\ge r\}$.
Then, $n_v = |\{r\in [\ell] \mid v \in U_r\}|$ for every $v \in V(G)$. Thus 
$$
|W| = \sum_{v \in V(G)}n_v =\sum_{v\in V(G)}\sum_{r\in [\ell]}|\{v\}\cap U_r| =\sum_{r\in[\ell]}|U_r|.
$$

Let $e\in E(G)$ have endpoints $u$ and $v$. The number of edges in $\tilde e$ that belongs to $\widetilde G[W]$ is at most $\min\left\{|\widetilde V_u \cap W|, |\widetilde V_v \cap W|\right\} = \min\{n_u, n_v\}$. Moreover, $$\min\{n_u, n_v\} = |\{r \in [\ell]\mid u,v \in U_r\}| = |\{r \in [\ell]\mid e \in E(G[U_r])\}|.$$
Therefore, summing over all edges of $G$, we obtain
\[
\begin{aligned}
|E(\widetilde G[W])|
&=\sum_{e\in E(G)}
  |\widetilde e\cap E(\widetilde G[W])|\\
&\le \sum_{e\in E(G)}
  |\{r\in[\ell] \mid e\in E(G[U_r])\}|\\
&= \sum_{e \in E(G)} \sum_{r\in [\ell]}|\{e\} \cap E(G[U_r])|\\
&=\sum_{r\in[\ell]}|E(G[U_r])|.
\end{aligned}
\]

For each $r \in [\ell]$, the inequality $|E(G[U_r])| \le \Gamma(G)\lfloor|U_r|/2\rfloor$ holds trivially when $|U_r|\le 1$, and follows from the definition of $\Gamma(G)$ when $|U_r|\ge 2$.
Hence,
$$
|E(\widetilde G[W])| \le \sum_{r\in [\ell]}|E(G[U_r])| \le \Gamma(G)\sum_{r\in[\ell]}\left\lfloor\frac{|U_r|}{2}\right\rfloor \le \Gamma(G) \left\lfloor \frac{\sum_{r \in [\ell]}|U_r|}{2} \right\rfloor = \Gamma(G)\left\lfloor\frac{|W|}{2}\right\rfloor.
$$
Therefore, $\tfrac{|E(\widetilde G[W])|}{\left\lfloor |W|/2\right\rfloor} \le \Gamma(G)$ for every $W\subseteq V(\widetilde G)$ with $|W|\ge 2$, and so $\Gamma(\widetilde G)\le \Gamma(G)$.
\end{proof}

\section{Proofs of Theorems~\ref{thm:edge-coloring-brief} and~\ref{thm:s-critical-graph}}\label{sec:edge}

In this section, we study which values can occur as the chromatic index of an $\ell$-fold cyclic cover of a graph $G$. We will use the following standard notation for partial edge-colorings.
Let $G$ be a graph, and let $\varphi$ be a partial $k$-edge-coloring of $G$. For each vertex $v\in V(G)$, let
\[
\varphi(v) := \{\varphi(e)\,\mid\, e \text{ is incident with } v \text{ and is colored under } \varphi\}
\] 
be the set of colors \emph{present} at $v$, and let
$\overline{\varphi}(v) := [k]\setminus \varphi(v)$ be the set of colors \emph{missing} at $v$. 
For two distinct colors $a, b \in [k]$, let $H$ be the spanning subgraph of $G$ with $E(H) = \varphi^{-1}(a) \cup \varphi^{-1}(b)$. Each component of $H$ is a path, possibly trivial, or an even cycle, possibly a $2$-cycle, whose edges are colored alternately with $a$ and $b$. Such a component is called an \emph{$(a,b)$-chain} of $G$ with respect to $\varphi$. Any two $(a,b)$-chains are either identical or disjoint. For a vertex $v \in V(G)$, let $P_v(a,b)$ denote the $(a,b)$-chain containing $v$. A \emph{Kempe change} on $P_v(a,b)$ is the operation of interchanging the colors $a$ and $b$ along this chain. We denote the resulting coloring by $\varphi':= \varphi/P_v(a,b)$. Then $\varphi'$ is still proper. Moreover, if $v$ is an endpoint of $P_v(a,b)$ and $a \in \overline{\varphi}(v)$, then $\overline{\varphi'}(v) = (\overline{\varphi}(v)\setminus\{a\})\cup\{b\}$, while the missing color set of every internal vertex of $P_v(a,b)$ remains unchanged.

To prove Theorem~\ref{thm:edge-coloring-brief}, we introduce the following two lemmas.

\begin{lemma}\label{lem:double-transfer}
Let $G$ be a graph, let $q \ge \Delta(G)$, and let $H$ be an edge-maximal spanning subgraph of $G$ with $\chi'(H)=q$. Let $F = E(G)\setminus E(H)$ and suppose that $F\ne\emptyset$. If $\alpha$ is the $\Z_2$-voltage assignment with $\supp(\alpha) = F$, then $\chi'(\widetilde G(\alpha, 2))=q$. Moreover, $\chi'(\widetilde G(\alpha',2)) > q$ for every $\Z_2$-voltage assignment
$\alpha'$ with $\supp(\alpha') \subsetneq F$.
\end{lemma}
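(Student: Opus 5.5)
We need two things: an upper bound $\chi'(\widetilde G(\alpha,2))\le q$ and the minimality statement. The lower bound $\chi'(\widetilde G(\alpha,2))\ge q$ is immediate: the cover contains $\widetilde H$ with zero voltages, i.e.\ two disjoint copies of $H$, and $\chi'(H)=q$. (If $q=\Delta(G)$ it also follows from $\Delta$.)

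\textbf{Upper bound.} Fix a proper $q$-edge-coloring $\varphi$ of $H$. For $f\in F$ with endpoints $u,v$, edge-maximality means $\chi'(H+f)>q$, but we do not need this for the bound. The idea is to color layer $0$ copy of $H$ by $\varphi$ and layer $1$ copy by $\varphi$ as well, then color the lifted crossing edges $(f,0),(f,1)$. This does not obviously work, so instead I would argue edge by edge. Process the edges of $F$ one at a time, maintaining a proper $q$-edge-coloring of the cover of $H\cup F'$ (with $F'\subseteq F$ the already added edges, voltage $1$). When adding $f=uv$, the graph $H+f$ with $f$ added has the two copies $u_0,v_0,u_1,v_1$. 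At $u_0$ some color $a$ is missing and at $v_0$ some color $b$ is missing (since degrees in the current graph minus $f$ are $<q$ at those vertices... need $\deg\le\Delta\le q$, and $f$ uncolored so $\le q-1$ colors present). We want to color $(f,0)=u_0v_1$ and $(f,1)=u_1v_0$.

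Use the symmetry: the current coloring can be kept invariant under the layer swap $\sigma:(x,i)\mapsto(x,i+1)$, which is an automorphism of the cover. With an invariant coloring, $\overline\varphi(u_0)=\overline\varphi(u_1)$, etc. Pick $a\in\overline\varphi(u_0)$, $b\in\overline\varphi(v_0)$. If $a=b$, color both new edges $a$; done and invariant. Otherwise consider the $(a,b)$-chain $P$ from $v_1$ (missing $b$). If $P$ does not end at $u_0$, Kempe-swap it, and its image $\sigma(P)$ (which is disjoint from $P$, or equal to $P$); swapping both keeps invariance and makes $a$ missing at $v_1,v_0$, so color both new edges $a$. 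The key obstacle is when $P$ ends at $u_0$ (or $\sigma(P)=P$). Here I would use the double cover structure: $P$ from $v_1$ ends at $u_0$; then $\sigma(P)$ goes $v_0$ to $u_1$. If $P\neq\sigma(P)$, instead swap the chain at $v_0$… symmetric issue. The resolution I expect: adding the path $P$ together with edge $u_0v_1$ yields an odd-even parity contradiction, since a $(a,b)$-path from $v_1$ (missing $b$) to $u_0$ (missing $a$) has even length, so $P+(f,0)$ is an odd cycle in the bipartite-like structure? Not bipartite in general. So instead I would use the alternative: color $(f,0)$ with $a$-after-swap along $\sigma(P)$ only, breaking invariance but handling both new edges directly: swap $\sigma(P)$ (from $v_0$ to $u_1$), so $a$ missing at $v_0$ and $b$ missing at $u_1$; color $u_1v_0$... $a$ missing at $u_1$? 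No, lost. This is the step I expect to be hardest; the correct tool is probably Vizing-fan recoloring in the double cover, using that $f$'s two lifts form, together with $\widetilde H$, a graph whose structure lets Kempe chains be rerouted between layers.

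\textbf{Minimality.} For $\alpha'$ with $\supp(\alpha')\subsetneq F$, pick $f\in F\setminus\supp(\alpha')$. Then $\widetilde G(\alpha',2)$ contains two disjoint copies of $H+f$ (zero voltage on $E(H)\cup\{f\}$), and by edge-maximality $\chi'(H+f)>q$, giving $\chi'(\widetilde G(\alpha',2))>q$.
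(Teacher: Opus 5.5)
Your lower bound and your argument for the ``moreover'' part are correct and coincide with the paper's. The upper bound $\chi'(\widetilde G(\alpha,2))\le q$, which is the heart of the lemma, is left unproved, and the plan you sketch cannot be completed. Suppose $c$ is a proper coloring of the cover of $H\cup F'$ that is invariant under $\sigma$. Then $c((e,0))=c((e,1))$ for every edge $e$. Each edge of $H\cup F'$ at $v$ has exactly one lift at $(v,0)$, so $c(e):=c((e,0))$ is a proper $q$-edge-coloring of $H\cup F'$ itself. As soon as the first $f\in F$ is added, this contradicts $\chi'(H+f)>q$.

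Consequently your case $a=b$ never occurs. Likewise, swapping $P$ and $\sigma(P)$ together and giving both lifts of $f$ the same color can never succeed. Concretely, maximality forces the $(a,b)$-chain from $(v,1)$ to end at $(u,1)$, not $(u,0)$. That is exactly the case in which your double swap removes $a$ from the missing sets at $(u,0)$ and $(u,1)$.

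Your remark that edge-maximality is not needed for the bound is also false. Take $G=C_5$ on vertices $1,\dots,5$, $q=2$, $H$ spanned by $12,23,34$, and $F=\{45,51\}$. Then $\chi'(H)=2$, but the voltage sum around the cycle is $0$. Hence $\widetilde G(\alpha,2)$ is two disjoint $5$-cycles and has chromatic index $3$. So no recoloring scheme (Vizing fans or otherwise) that ignores maximality can work.

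The missing idea, which is the paper's, is to keep only the weaker invariant $\overline{\psi}((v,0))=\overline{\psi}((v,1))$ for every $v$. This comes with the count that at least as many colors are missing as there are uncolored $F$-edges at $v$. You then break the symmetry deliberately, step by step:
\begin{enumerate}
\item For $f$ oriented from $u$ to $v$, pick $a\in\overline\psi((u,1))$ and $b\in\overline\psi((v,1))$.
\item The restriction to layer $1$ is a proper $q$-coloring of a copy of $H$, and $H+f$ is not $q$-colorable. Hence $a\ne b$, and $P=P_{(u,1)}(a,b)$ must end at $(v,1)$; otherwise swapping $P$ would leave $b$ missing at both ends of $f$ in layer $1$.
\item Swap only $P$. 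Now $b$ is missing at $(u,1)$ and $a$ at $(v,1)$. Meanwhile $a$ is still missing at $(u,0)$ and $b$ at $(v,0)$, since these are not endpoints of $P$.
\item Color $(u,0)(v,1)$ with $a$ and $(u,1)(v,0)$ with $b$. These are two different colors on the two lifts, which is precisely what your $\sigma$-invariance forbids.
\end{enumerate}
Afterwards $(u,0)$ and $(u,1)$ have both lost exactly $a$, and $(v,0)$ and $(v,1)$ have both lost exactly $b$, relative to their common missing sets. So the invariant persists, and induction over $F$ yields the $q$-coloring.
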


\begin{proof}
Let $\widetilde G := \widetilde G(\alpha, 2)$. For each $i \in \Z_2$, let $V_i := V(G) \times \{i\}$. Since $\supp(\alpha) = F$, each of the two layers induces a copy of $H$, that is, $\widetilde G[V_i] \cong H$. Hence $\chi'(\widetilde G) \ge \chi'(\widetilde G[V_i]) = \chi'(H) = q$. We now show that $\chi'(\widetilde G) \le q$.

For each $e\in F$, its two lifted edges $(e,0)$ and $(e,1)$ join $V_0$ and $V_1$. We call the pair $\{(e,0),(e,1)\}$ the \emph{crossing pair} of $e$. Fix an ordering $F=\{e_1,\ldots,e_f\}$. Let $\varphi_G$ be a partial $q$-edge-coloring of $G$ such that $H$ is properly colored. 
Define $\psi_0$ by coloring both $\widetilde G[V_0]$ and $\widetilde G[V_1]$ according to
$\varphi_G$, that is, for every $e\in E(H)$ and every $i \in \Z_2$, $\psi_0((e, i))=\varphi_G(e)$. Thus the only uncolored edges are the crossing pairs of edges in $F$. 

Starting with $\psi_0$, we inductively construct partial $q$-edge-colorings $\psi_1,\dots,\psi_f$ such that, for each $j\in \{1,2,\dots,f\}$, the crossing pairs of $e_1,e_2,\dots,e_j$ are colored, while those of $e_{j+1},\dots,e_f$ remain uncolored. Throughout the induction, we maintain the following invariant:
for every $v\in V(G)$, 
$$\overline{\psi_j}((v,0)) = \overline{\psi_j}((v,1)),$$
and their common size is at least the number of edges among $e_{j+1},\dots,e_f$ that are incident with $v$. 

The invariant holds for $j=0$. Indeed, for every $v \in V(G)$, by the definition of $\psi_0$, we have 
$\overline{\psi_0}((v,0)) = \overline{\varphi_G}(v) = \overline{\psi_0}((v,1))$. 
Moreover, since $d_H(v) + d_F(v) = d_G(v) \le \Delta(G) \le q$, it follows that $|\overline{\varphi_G}(v)| = q-d_H(v) \ge d_F(v)$.

Now suppose that $0\le j<f$ and that
$\psi_j$ has been constructed. Suppose that $e_{j+1}$ has endpoints $u$ and $v$. Assume, without loss of generality, that $e_{j+1}$ is oriented from $u$ to $v$ and its crossing pair is $\{(u,0)(v,1), (u,1)(v,0)\}$. Since this crossing pair is still uncolored, the invariant allows us to choose $a \in \overline{\psi_j}((u,1))$ and $b \in \overline{\psi_j}((v,1))$. 
Since the restriction of $\psi_j$ to $\widetilde G[V_1]$ is a proper $q$-edge-coloring of a copy of $H$ and $H+e_{j+1}$ is not $q$-edge-colorable, we have $\overline{\psi_j}((u,1)) \cap \overline{\psi_j}((v,1)) = \emptyset$. In particular, $a\in \psi_j((v,1))$, $b \in \psi_j((u,1))$, and $a \ne b$. Let $P:= P_{(u,1)}(a,b)$. We claim that $(v,1)$ belongs to $P$. Otherwise, $(v,1) \notin V(P)$. Perform a Kempe change on $P$. Then $b$ is missing at both $(u,1)$ and $(v,1)$. Hence the restriction of the resulting coloring to $\widetilde G[V_1] \cong H$ can be extended to a proper $q$-edge-coloring of $H+e_{j+1}$ by assigning $b$ to $e_{j+1}$, a contradiction. Thus $P$ is a path with endpoints $(u,1)$ and $(v,1)$.
Perform a Kempe change on $P$. Then $b$ is missing at $(u,1)$ and $a$ is missing at $(v,1)$, while the missing color sets of all internal vertices of $P$ remain same. In particular, $a$ is still missing at $(u,0)$ and $b$ is still missing at $(v,0)$. Hence we may assign color $a$ to $(u,0)(v,1)$ and color $b$ to $(u,1)(v,0)$. Denote the resulting partial coloring by $\psi_{j+1}$. The following hold:
\begin{itemize}
\item
$\overline{\psi_{j+1}}(w)=\overline{\psi_j}(w)$ for each $w\notin\{(u,0),(u,1),(v,0),(v,1)\}$;
\item
$\overline{\psi_{j+1}}((u,0))
=\overline{\psi_{j}}((u,0))\setminus\{a\} = \overline{\psi_{j+1}}((u,1))$;
\item 
$\overline{\psi_{j+1}}((v,0))
=\overline{\psi_{j}}((v,0))\setminus\{b\} = \overline{\psi_{j+1}}((v,1))$.
\end{itemize}
Moreover, at each of $u$ and $v$, the number of incident edges in $F$ whose crossing pairs remain uncolored decreases by one. Hence the invariant is preserved. By induction, $\psi_f$ is a proper $q$-edge-coloring of $\widetilde G$. Thus $\chi'(\widetilde G) \le q$.

For the moreover statement, let $F' \subsetneq F$, and let $\alpha'$ be the $\Z_2$-voltage assignment with $\supp(\alpha')=F'$. Choose $e \in F\setminus F'$. Since every edge of $H+e$ has voltage $0$ under $\alpha'$, each layer of $\widetilde G(\alpha', 2)$ contains a copy of $H+e$. By the edge-maximality of $H$, $H+e$ is not $q$-edge-colorable. Therefore $\chi'(\widetilde G(\alpha', 2)) > q$.
\end{proof}

Recall that every subgroup of the cyclic group $\Z_\ell$ is cyclic. More precisely, if $d$ is a divisor of $\ell$,
\[ 
C_d^{(\ell)} :=\left\langle \frac{\ell}{d}\right\rangle = \left\{0,\frac{\ell}{d},2\frac{\ell}{d},\ldots,(d-1)\frac{\ell}{d}\right\} 
\] 
is the unique subgroup of $\Z_\ell$ of order $d$. 
Its cosets are 
\[ 
s+C_d^{(\ell)}=\left\{s+c \,\mid\, c\in C_d^{(\ell)}\right\}, \quad s=0,1,\ldots,\frac{\ell}{d}-1,
\] 
which form a partition of $\Z_\ell$.

\begin{lemma}\label{lem:subgroup}
Let $\ell \ge 2$ be an integer, let $d$ be a divisor of $\ell$, and let $G$ be a graph.
Suppose that a $\Z_\ell$-voltage assignment $\alpha$ satisfies $\alpha(e) \in C_d^{(\ell)}$ for every $e\in E(G)$. Define a $\Z_d$-voltage assignment $\alpha_d: E(G) \to \Z_d$ by
\[
\alpha_d(e)=r
\quad\Longleftrightarrow\quad
\alpha(e)=r\frac{\ell}{d}.
\]
Then, $\widetilde G(\alpha, \ell)$ is the disjoint union of the induced subgraphs
\[
\widetilde G(\alpha, \ell)[V(G)\times (s+C_d^{(\ell)})],
\quad s=0,1,\ldots,\ell/d-1,
\]
and each of these induced subgraphs is isomorphic to the $d$-fold cyclic cover $\widetilde G(\alpha_d, d)$.
\end{lemma}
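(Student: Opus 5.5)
The plan is to verify the two claims directly from the definition of the cyclic cover; no earlier result is needed. Write $m:=\ell/d$ and $C:=C_d^{(\ell)}=\langle m\rangle$. The map $\alpha_d$ is well defined because the map $r\mapsto rm$ is a group isomorphism $\Z_d\to C$ (it is a homomorphism, and it is injective since $rm\equiv 0 \pmod \ell$ forces $d\mid r$). So every $\alpha(e)\in C$ has exactly one preimage $r\in\Z_d$.

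\textbf{Decomposition.} Set $W_s:=V(G)\times(s+C)$ for $s=0,\dots,m-1$. Since the cosets partition $\Z_\ell$, the sets $W_s$ partition $V(\widetilde G(\alpha,\ell))$. Each edge $(e,i)$ joins $(t(e),i)$ and $(h(e),i+\alpha(e))$. Because $\alpha(e)\in C$, the indices $i$ and $i+\alpha(e)$ lie in the same coset. Hence every edge has both ends in a single $W_s$, and no edge joins different $W_s$. This gives the disjoint-union statement, and the edges of $\widetilde G(\alpha,\ell)[W_s]$ are exactly the edges $(e,i)$ with $i\in s+C$.

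\textbf{Isomorphism.} For fixed $s$, define $\theta_s:V(G)\times\Z_d\to W_s$ by $\theta_s(v,r)=(v,s+rm)$. On edges, send $(e,r)\mapsto(e,s+rm)$. Both maps are bijections by the isomorphism $\Z_d\cong C$ above.

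The key check is incidence preservation. In $\widetilde G(\alpha_d,d)$ the edge $(e,r)$ joins $(t(e),r)$ and $(h(e),r+\alpha_d(e))$. Under $\theta_s$ these endpoints go to $(t(e),s+rm)$ and $(h(e),s+rm+\alpha_d(e)m)$. Since $\alpha_d(e)m=\alpha(e)$ in $\Z_\ell$, the second endpoint equals $(h(e),s+rm+\alpha(e))$. These are exactly the endpoints of $(e,s+rm)$ in $\widetilde G(\alpha,\ell)$.

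Because edges are mapped individually, parallel edges are handled automatically, and the pair of maps is a multigraph isomorphism. The only point needing care is the well-definedness of addition mod $d$ versus mod $\ell$: we need $(r+r')m$ taken mod $\ell$ to correspond to $r+r'$ taken mod $d$. This is again the homomorphism property, so I expect no real obstacle.
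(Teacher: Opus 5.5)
Your proposal is correct and follows essentially the same route as the paper's proof. Both proofs partition the vertex set into the sets $V(G)\times(s+C_d^{(\ell)})$, observe that every lifted edge stays within one coset, and check that the relabeling $(v,r)\mapsto(v,s+r\ell/d)$ sends each edge $(e,r)$ of $\widetilde G(\alpha_d,d)$ to the edge $(e,s+r\ell/d)$. Your explicit use of the isomorphism $\Z_d\cong C_d^{(\ell)}$ justifies the well-definedness of $\alpha_d$ and the compatibility of addition mod $d$ with addition mod $\ell$, which the paper leaves implicit.
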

\begin{proof}
For $s = 0,1, \dots,\ell/d-1$, let
\[
X_s=V(G)\times (s+C_d^{(\ell)}) = \bigcup_{j \in \Z_d} V(G)\times\left\{s+j\frac{\ell}{d}\right\}.
\]
The sets $X_s$ partition $V(\widetilde G(\alpha, \ell))$. Since every voltage of $\alpha$ lies in $H$ and $\left(s+C_d^{(\ell)}\right) + C_d^{(\ell)} = s+C_d^{(\ell)}$, any lifted edge with one endpoint in $X_s$ has its other endpoint in $X_s$ as well. Hence $\widetilde G(\alpha, \ell)$ is the disjoint union of the induced subgraphs $\widetilde G(\alpha, \ell)[X_s]$. 

Fix an arbitrary $s \in \{0,1,\ldots,\ell/d-1\}$. Define \[ 
\phi_s:V(\widetilde G(\alpha_d, d)) \to X_s \quad\text{by} \quad \phi_s((v,j)) = \left(v,s+j\frac{\ell}{d}\right). 
\] 
Clearly, $\phi_s$ is a bijection on vertices. Let $e \in E(G)$ be any edge and suppose that $\alpha(e)=r\ell/d$. Then, $\alpha_d(e)=r$. In $\widetilde G(\alpha_d, d)$, the edge $(e,j)$ joins 
\[ 
(t(e),j) \quad\text{and}\quad (h(e),j+r), 
\] 
while in $\widetilde G(\alpha, \ell)[X_s]$, the edge $(e, s+j\ell/d)$ joins 
\[ 
\left(t(e),s+j\frac{\ell}{d}\right) \quad\text{and}\quad \left(h(e),s+(j+r)\frac{\ell}{d}\right). 
\] 
Thus the lifted edge $(e,j)$ of $\widetilde G(\alpha_d, d)$ corresponds exactly to the lifted edge $(e, s+j(\ell/d))$ of $\widetilde G(\alpha, \ell)[X_s]$. Hence $\phi_s$ preserves adjacency and edge multiplicities, and so $\widetilde G(\alpha, \ell)[X_s] \cong \widetilde G(\alpha_d, d)$.
\end{proof}

\begin{proof}[Proof of Theorem~\ref{thm:edge-coloring-brief}.]
We first consider the case $\ell = 2$. If $s=\chi'(G)$, let $\beta$ be the all-zero voltage assignment, and so $\widetilde G(\beta, 2)$ is the disjoint union of two copies of $G$. Hence $\chi'(\widetilde G(\beta,2)) = \chi'(G)$. We may therefore assume that $\chi'(G) \ge \Delta(G)+1$ and $\Delta(G) \le s < \chi'(G)$.  
Fix a proper $\chi'(G)$-edge-coloring of $G$ with color classes $M_1,M_2,\dots,M_{\chi'(G)}$. Let $H_0$ be the spanning subgraph of $G$ with $E(H_0)=M_1\dot\cup M_2\dot\cup\cdots\dot\cup M_s$. Then $\chi'(H_0)=s$, since otherwise an edge-coloring of $H_0$ with fewer than $s$ colors, together with the remaining color classes, would yield an edge-coloring of $G$ with fewer than $\chi'(G)$ colors. Among all spanning subgraphs $H_1$ of $G$ such that $H_0\subseteq H_1$ and $\chi'(H_1) = s$, choose $H$ so that
$|E(H)|$ is maximum. Since $s < \chi'(G)$, we have $H\ne G$, and by the choice of $H$, $\chi'(H') > s$ for every subgraph $H'$ of $G$ with $H'\supsetneq H$. Let $F=E(G)\setminus E(H)$. Then $F \ne \emptyset$. Let $\beta$ be the $\Z_2$-voltage assignment with support $F$. Applying Lemma~\ref{lem:double-transfer} with $q=s$, we obtain $\chi'(\widetilde G(\beta, 2))=s$.

Now let $\ell \ge 2$ be even. If $s=\chi'(G)$, let $\beta$ be the all-zero $\Z_2$-voltage assignment. If $s < \chi'(G)$, let $H$, $F$, and $\beta$ be as constructed above. Thus in the latter case, $\supp(\beta) = E(G)\setminus E(H)$, and in either case, $\chi'(\widetilde G(\beta,2))=s$.
Define a $\Z_\ell$-voltage assignment $\alpha$ by
\[
\alpha(e)=
\begin{cases}
\ell/2, & \text{if } \beta(e)=1,\\
0, & \text{if } \beta(e)=0.
\end{cases}
\]
Clearly, $\supp(\alpha) = \supp(\beta)$.
By Lemma~\ref{lem:subgroup} with $d=2$, $\widetilde G(\alpha, \ell)$ is a disjoint union of $\ell/2$ copies of $\widetilde G(\beta, 2)$. Hence $\chi'(\widetilde G(\alpha, \ell))=\chi'(\widetilde G(\beta, 2))=s$.

We next show that $\alpha$ satisfies the claimed support property. Suppose that $s < \chi'(G)$, and let $\alpha'$ be any $\Z_\ell$-voltage assignment with $\supp(\alpha') \subsetneq \supp(\alpha)$. Choose $e\in \supp(\alpha) \setminus \supp(\alpha')$. Since $\supp(\alpha)=\supp(\beta)= E(G)\setminus E(H)$, we have $e \in E(G)\setminus E(H)$. Moreover, every edge of $H+e$ has voltage $0$ under $\alpha'$, so each layer of $\widetilde G(\alpha', \ell)$ contains a copy of $H+e$. Thus $$\chi'(\widetilde G(\alpha', \ell))\ge \chi'(H+e) > s = \chi'(\widetilde G(\alpha, \ell)).$$
\end{proof}

The next example shows that the support of a voltage assignment alone does not determine the chromatic index of the cyclic cover. It also shows that Theorem~\ref{thm:edge-coloring-brief} does not extend to odd values of $\ell$ for general graphs.
\begin{example}\label{ex:support-not-enough}
Let $G$ be an oriented cycle of order $2r+1$. Let $\ell \ge 2$, and let $\alpha: E(G)\to \Z_\ell$ be a voltage assignment.
Set $s = \sum_{e\in E(G)}\alpha(e) \pmod \ell$. It is easy to check that $\widetilde G(\alpha, \ell)$ is the disjoint union of $\gcd(\ell,s)$ cycles, each of length
$\frac{\ell}{\gcd(\ell,s)}(2r+1)$.
Consequently,
\[
\chi'(\widetilde G(\alpha,\ell))=
\begin{cases}
2, & \text{if } \tfrac{\ell}{\gcd(\ell,s)} \text{ is even},\\
3, & \text{if } \tfrac{\ell}{\gcd(\ell,s)} \text{ is odd}.
\end{cases}
\]
For instance, let $\ell = 6$, let $e \in E(G)$, and let $\alpha$ be a $\Z_6$-voltage assignment with $\supp(\alpha)=\{e\}$. Then $\chi'(\widetilde G(\alpha,6)) = 2$ if $\alpha(e)=3$ and $\chi'(\widetilde G(\alpha, 6))=3$ if $\alpha(e)=4$. 
This also implies that when $\ell$ is odd, $\chi'(\widetilde G(\alpha, \ell)) = 3$ for every $\Z_\ell$-voltage assignment $\alpha$.
\end{example}

We now turn to edge-coloring critical graphs and give a complete answer to Question~\ref{q1} when $\ell$ is odd. We begin with two auxiliary lemmas. The first gives a structural property of critical graphs with $\chi'(G)\ge \Delta(G)+2$. A related result can be found in~\cite{CJZ2025}. We include a proof to keep the paper self-contained.
\begin{lemma}\label{lem:structure}
     Let $G$ be a critical graph of order $n$ and size $m$ with $\chi'(G) =\Delta(G)+k$ for some $k \ge 2$. Then, $n$ is odd and $m = (\Delta(G)+k-1)\frac{n-1}{2}+1$. 
\end{lemma}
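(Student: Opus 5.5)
We combine the Goldberg--Seymour theorem with criticality. Write $\Delta=\Delta(G)$ and $q=\chi'(G)=\Delta+k$ with $k\ge 2$. By the resolved Goldberg--Seymour conjecture, $\chi'(G)\le\max\{\Delta+1,\Gamma(G)\}$. Since $q\ge\Delta+2$, this forces $\Gamma(G)=q$. Hence there is a subgraph $H\subseteq G$ with $|H|\ge 2$ and $|E(H)|>(q-1)\lfloor|H|/2\rfloor$. We may take $H$ to be induced, say $H=G[X]$.

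The first step is to show that $X=V(G)$. Suppose $H$ were a proper subgraph of $G$. Criticality gives $\chi'(H)\le q-1$, so $H$ splits into $q-1$ matchings. Each matching has at most $\lfloor|H|/2\rfloor$ edges, so $|E(H)|\le (q-1)\lfloor|H|/2\rfloor$, a contradiction. Thus $H$ is not proper, so $H=G$, and
\[
m\ge (q-1)\lfloor n/2\rfloor+1 .
\]

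The second step shows that $n$ is odd. If $n$ were even, then $\lfloor n/2\rfloor=n/2$, and the inequality above would give $m> (q-1)n/2\ge (\Delta+1)n/2>\Delta n/2$. But $m\le \Delta n/2$ by counting degrees, a contradiction. So $n$ is odd and
\[
m\ge (q-1)\frac{n-1}{2}+1 .
\]

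The third step, which I expect to be the main point, is the matching upper bound. Pick any edge $e$. By criticality $\chi'(G-e)\le q-1$, so $G-e$ is a union of $q-1$ matchings. Each of these has at most $(n-1)/2$ edges since $n$ is odd. Therefore $m-1\le (q-1)(n-1)/2$. Combined with the lower bound, this gives $m=(\Delta+k-1)\frac{n-1}{2}+1$.

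The only delicate point is making sure $\Gamma(G)=q$ yields a subgraph violating the $(q-1)$-bound. This follows from the ceiling: $\lceil |E(H)|/\lfloor |H|/2\rfloor\rceil = q$ implies $|E(H)|>(q-1)\lfloor|H|/2\rfloor$.
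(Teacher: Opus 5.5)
Your proof is correct and has the same skeleton as the paper's: Goldberg--Seymour gives $\Gamma(G)=\chi'(G)$, criticality forces the densest set to be all of $V(G)$, and a $(q-1)$-coloring of $G-e$ caps $m$. The difference is how you get from the dense subgraph to ``$n$ is odd.''

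The paper cites Stiebitz et al.\ for the fact that the density is attained on an \emph{odd} set $S$ with $|S|\ge 3$. It then uses connectivity of $G$ to find an edge $e$ outside $G[S]$, and gets $\chi'(G-e)\ge \chi'(G[S])\ge \chi'(G)$, contradicting criticality. Oddness of $n$ is inherited from $S$. The value of $m$ is read off from $\lceil 2m/(n-1)\rceil = q$ and $\lceil 2(m-1)/(n-1)\rceil\le q-1$.

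You instead take an arbitrary densest subgraph and apply the ``every proper subgraph'' clause of the paper's definition of criticality directly, so no connectivity argument is needed. You then get the parity of $n$ from the handshake bound $m\le \Delta n/2$, which is where $k\ge 2$ is used a second time (you need $q-1\ge \Delta+1$). Your final matching count on $G-e$ is the same as the paper's ceiling step, just without ceilings.

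In effect, you reprove the special case of the odd-set reduction that is needed here. Your argument is therefore self-contained apart from Goldberg--Seymour, at the cost of a few extra lines. The paper's version is shorter given the citation.
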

\begin{proof}
    Since $\chi'(G) \ge \Delta(G)+2$, by confirmed Goldberg-Seymour Conjecture, $\Gamma(G) = \chi'(G)$. By~\cite{SSTF2012GraphEC}, there exists an odd set $S \subseteq V(G)$ with $|S| \ge 3$ such that $\left\lceil \tfrac{2|E(G[S])|}{|S|-1}\right\rceil = \chi'(G)$. We claim that $S = V(G)$. Suppose for a contradiction that $S\subsetneq V(G)$. Since $G$ is connected, there is an edge $e \notin E(G[S])$. Hence $G[S] \subseteq G-e$, and thus $\chi'(G-e) \ge \chi'(G[S]) \ge \chi'(G)$. On the other hand, as $G$ is critical, we have $\chi'(G-e) < \chi'(G)$, a contradiction. Thus $S = V(G)$, so $n$ is odd and $\left\lceil \tfrac{2m}{n-1} \right\rceil = \chi'(G) = \Delta(G)+k$.
    Moreover, by criticality of $G$, $\left\lceil \tfrac{2(m-1)}{n-1}\right\rceil \le \left\lceil \tfrac{2m}{n-1} \right\rceil - 1$. Since $n$ is odd, this inequality forces $\tfrac{2m-2}{n-1}$ to be an integer, which must equal $\Delta(G)+k-1$. Therefore $m = (\Delta(G)+k-1)\tfrac{n-1}{2}+1$.
\end{proof}

\begin{lemma}\label{lem:|F|-set}
Let $r,p,\ell$ be positive integers, where $\ell$ is odd, and let $X=\{0,1,\dots,r-1\}$. There exist subsets $X_0,X_1,\dots,X_{\ell-1} \subseteq X$, each of size $p$, such that $X_i \cap X_{i+1} = \emptyset$ for every $i \in \Z_\ell$, if and only if $\ell(r-2p) \ge r$.
\end{lemma}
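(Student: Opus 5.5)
We prove the two directions separately. This is essentially the question of when the cycle $C_\ell$ (odd length) has a homomorphism to the Kneser graph $K(r,p)$, and the criterion $\ell(r-2p)\ge r$ is the fractional-type bound; we treat it directly.

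\emph{Necessity.} Suppose such $X_0,\dots,X_{\ell-1}$ exist. We double count the total multiplicity $\sum_{i\in\Z_\ell}|X_i| = \ell p$ by counting, for each $x\in X$, the number $c_x$ of indices $i$ with $x\in X_i$. Since consecutive sets are disjoint, the indices containing $x$ form an independent set in the cycle $C_\ell$. As $\ell$ is odd, $c_x\le (\ell-1)/2$. This gives $\ell p\le r(\ell-1)/2$, i.e.\ $2\ell p\le r\ell - r$, which rearranges exactly to $\ell(r-2p)\ge r$. So necessity is just this counting step, and it is easy.

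\emph{Sufficiency.} Assume $\ell(r-2p)\ge r$, equivalently $p\le \frac{r(\ell-1)}{2\ell}$. We construct the sets cyclically: arrange $X$ as $\Z_r$ and set $X_i:=\{ip, ip+1,\dots, ip+p-1\}\pmod r$. In other words, consecutive blocks of $p$ elements, wrapping around, with each $X_{i+1}$ starting right after $X_i$ ends. For $0\le i<\ell-1$, the sets $X_i$ and $X_{i+1}$ are disjoint provided $2p\le r$, which follows from the hypothesis since $r-2p\ge r/\ell>0$.

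The main obstacle is the wrap-around pair $X_{\ell-1},X_0$. The set $X_{\ell-1}$ occupies positions $(\ell-1)p,\dots,\ell p-1$ modulo $r$, and we need these to avoid $\{0,\dots,p-1\}$ modulo $r$. A single cyclic layout forces $\ell p\equiv$ something compatible with $r$, which need not hold. So instead of strict blocks of step $p$, we choose the starting points $a_i$ of $X_i=\{a_i,\dots,a_i+p-1\}$ with gaps $a_{i+1}-a_i\in\{p,p+1,\dots\}$, arranged so that after $\ell$ steps we have gone around the circle some integral number $w$ of times with total displacement $wr$. The disjointness condition for a pair of arcs of length $p$ on $\Z_r$ is that the gap $g_i=a_{i+1}-a_i \bmod r$ lies in $[p, r-p]$. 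So we need integers $g_0,\dots,g_{\ell-1}\in[p,r-p]$ with $\sum g_i = wr$ for some integer $w$. Take $w=(\ell-1)/2$, since $\ell$ is odd. Such $g_i$ exist iff $\ell p\le wr\le \ell(r-p)$. The left inequality is exactly $2\ell p\le(\ell-1)r$, our hypothesis. The right one is $(\ell-1)r\le 2\ell(r-p)$, i.e.\ $2\ell p\le (\ell+1)r$, which is weaker. Distributing the sum $wr$ among the $g_i$ within the interval (all equal to $p$, then increasing some) is routine, and this completes sufficiency.

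Finally, the degenerate case $p>r$ or $r-2p<0$ is excluded by the hypothesis, and $\ell=1$ cannot satisfy the cyclic condition nontrivially: $X_0\cap X_0=\emptyset$ forces $p=0$. This is consistent with the hypothesis giving $r-2p\ge r$, i.e.\ $p\le 0$. The case $\ell=1$ is therefore vacuous for positive $p$, matching both sides being false.
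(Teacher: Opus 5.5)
Your proof is correct. The necessity direction is the same double count the paper uses. For sufficiency you take a genuinely different route.

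You work on the primal side. Each $X_i$ is a cyclic arc $\{a_i,\dots,a_i+p-1\}$ of $\Z_r$, so disjointness of consecutive arcs becomes the local condition $g_i\in[p,r-p]$ on the gaps. The only global constraint is closing up, $\sum g_i\equiv 0\pmod r$. You meet it with winding number $w=(\ell-1)/2$, which is feasible exactly when $\ell p\le \frac{\ell-1}{2}r$; the upper constraint $wr\le \ell(r-p)$ follows automatically. The one point you leave implicit is that each arc has exactly $p$ elements, which holds because the hypothesis forces $2p<r$.

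The paper works on the dual side. Writing $\ell=2s+1$, it lists $0,2,4,\dots,2(p\ell-1)$ modulo $\ell$ and cuts this list into at most $r$ consecutive blocks $B_x$ of size at most $s$. It checks that each block consists of distinct, pairwise nonadjacent vertices of $C_\ell$, and then sets $X_i=\{x : i\in B_x\}$. In other words, it covers every vertex of $C_\ell$ exactly $p$ times by at most $r$ independent sets.

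The two routes place the work in different spots:
\begin{itemize}
\item In the paper's version, closing up is free, since each vertex automatically occurs $p$ times in the step-$2$ list. The effort goes into the independence check inside each block.
\item In your version, adjacency is trivially local, and the effort is the one-line feasibility of an integer sum.
\end{itemize}
Your construction makes it transparent that the criterion is the circular-chromatic bound $\chi_c(C_\ell)=\ell/s\le r/p$: the $a_i$ trace a closed walk of winding $s$ in the circular clique on $\Z_r$. It also yields explicit interval sets. The paper's covering formulation fits more directly with the fractional/density viewpoint of the remark on $\chi'(C_\ell^p)$.
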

\begin{proof}
Write $\ell = 2s+1$. Suppose that $\ell(r-2p) \ge r$, or $rs \ge p\ell$. 
Consider the sequence $$0,2,4,\dots, 2(p\ell-1) \pmod{\ell}.$$ Since $\gcd(2,\ell)=1$, multiplication by $2$ induces a permutation of $\Z_\ell$. Hence for each $q \in \{0,1,\dots,p-1\}$, the block $2q\ell,2(q\ell+1),\dots, 2(q\ell+\ell-1) \pmod{\ell}$ contains every element of $\Z_\ell$ exactly once. Thus every element of $\Z_\ell$ occurs exactly $p$ times in this sequence. Since $rs \ge p\ell$, we may partition the sequence into at most $r$ consecutive blocks, each of size at most $s$. By adding empty blocks if necessary, denote these blocks by $B_0,B_1,\dots,B_{r-1}$. Every nonempty block has the form $B_x=\{2a_x, 2(a_x+1), \dots, 2(a_x+b_x-1)\} \pmod{\ell}$ for some integers $a_x \ge 0$ and $1 \le b_x \le s$.

We claim that the elements in each block $B_x$ are distinct modulo $\ell$. Indeed, if $2(a_x+q)\equiv 2(a_x+q')\pmod\ell$ for some $0\le q < q'<b_x$, then $q'-q \equiv 0 \pmod \ell$, as $\gcd(2,\ell)=1$. This is impossible because $1\le q'-q\le b_x-1 \le s-1 < \ell$. Thus the elements of each $B_x$ are distinct in $\mathbb Z_\ell$.
Let $C_\ell$ be the cycle whose vertex set is $\Z_\ell$, where each $i\in \Z_\ell$ is adjacent to $i+1$, with addition taken modulo $\ell$.
We claim that each $B_x$ is an independent set in $C_\ell$. Otherwise, suppose that two elements of $B_x$ are adjacent in $C_\ell$. Then, for some $0\le q < q' < b_x$, $2(q'-q)\equiv \pm 1\pmod{\ell}$.
However, $2\le 2(q'-q)\le 2b_x-2 \le 2s-2=\ell-3$, so $2(q'-q)$ is congruent to neither $1$ nor $-1$ modulo $\ell$, a contradiction. Thus $B_x$ is independent. 
For each $i\in \Z_\ell$, define $X_i=\{x\in X \mid i\in B_x\}$.
Because every element of $\Z_\ell$ occurs exactly $p$ times in
the original sequence and no block contains a repeated element, we have $|X_i|=p$ for every $i \in \Z_\ell$.
Moreover, if $x\in X_i\cap X_{i+1}$, then both $i$ and $i+1$ belong to
$B_x$, contradicting the fact that $B_x$ is independent in
$C_\ell$. Therefore, $X_i\cap X_{i+1}=\emptyset$ for every $i\in\Z_\ell$, where the indices are taken modulo $\ell$. In particular, $X_{\ell-1}\cap X_0=\emptyset$.

Now suppose that such sets $X_0, X_1, \dots, X_{\ell-1}$ exist. For each $j \in \Z_r$, denote $I_j := \{i \in \Z_\ell \,\mid\, j \in X_i\}$. Since $X_i \cap X_{i+1} = \emptyset$ for all $i$, we have $|I_j| \le \tfrac{\ell-1}{2}$. By double counting, $$\ell p = \sum_{j \in \Z_r}|I_j| \le r\frac{\ell-1}{2}.$$ Thus $r \le \ell(r-2p)$. This completes the proof.
\end{proof}

\begin{remark}
Lemma~\ref{lem:|F|-set} has an equivalent formulation in terms of edge-coloring. 
Let $p$ and $\ell$ be positive integers, where $\ell$ is odd, and let $C_\ell^p$ be the graph obtained from an $\ell$-cycle by replacing each edge with $p$ parallel edges. Then the lemma is equivalent to the statement that $$\chi'(C_\ell^p) = \left\lceil\frac{2\ell p}{\ell-1}\right\rceil.$$

Seymour~\cite{SEYMOUR198182} conjectured that every planar graph $G$ satisfies $\chi'(G) = \max\{\Delta(G),\Gamma(G)\}$, and later proved the conjecture for series-parallel graphs~\cite{seymour1990}. Here, a graph is \emph{series-parallel} if it has no $K_4$-minor. Since the underlying simple graph of $C_\ell^p$ is a cycle and hence
has no $K_4$-minor, $C_\ell^p$ is series-parallel. Moreover, $\Delta(C_\ell^{p})=2p$ and $\Gamma(C_\ell^{p}) = \lceil 2\ell p/(\ell-1)\rceil$.
Consequently, $\chi'(C_\ell^p) = \lceil2\ell p/(\ell-1)\rceil$. Thus Lemma~\ref{lem:|F|-set} can also be deduced from Seymour's result. However, the proof above is direct and does not rely on that result.
\end{remark}

We now prove the following stronger result, which immediately implies Theorem~\ref{thm:s-critical-graph}.

\begin{theorem}\label{thm:critical-graph}
    Let $\ell \ge 2$ be odd. Let $G$ be a critical graph of order $n$ and size $m$ with $\chi'(G) = \Delta + k$, where $\Delta = \Delta(G) \ge 3$ and $k \ge 1$. The following statements hold:
    \begin{enumerate}[label={(\arabic*)}]
        \item For every $e\in E(G)$ and every $Z_\ell$-voltage assignment $\alpha$ with $\supp(\alpha)=\{e\}$, $\chi'(\widetilde G(\alpha,\ell)) = \chi'(G)-1$.
        \item Suppose that $k \ge 2$. For each $t \in \{0,1,\dots,k-2\}$, there exists a $\Z_\ell$-voltage assignment $\alpha$ such that  $\chi'(\widetilde G(\alpha, \ell)) = \Delta+t$ if and only if $\Delta + t \ge \left\lceil\frac{2\ell m}{\ell n-1}\right\rceil$. 
    \end{enumerate}
\end{theorem}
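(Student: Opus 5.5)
We treat the two statements separately. In both, the lower bounds come from layer subgraphs or from density, and the upper bounds come from an explicit edge-coloring of the cover.

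\textbf{Part (1).} Write $\supp(\alpha)=\{e\}$ with $e$ oriented from $u$ to $v$ and $\alpha(e)=a\ne 0$. Let $d=\ell/\gcd(a,\ell)$. Then $\alpha(e)$ generates $C_d^{(\ell)}$, and $d$ is odd with $d\ge 3$. By Lemma~\ref{lem:subgroup} we may assume $\ell=d$ and $\alpha(e)$ is a generator.

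For the lower bound, each layer induces a copy of $G-e$. Since $G$ is critical and deleting an edge lowers $\chi'$ by at most one, this gives $\chi'(\widetilde G)\ge \chi'(G)-1=:q$.

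For the upper bound, order the layers $L_0,L_1,\dots,L_{\ell-1}$ along the generator, so that the lifted edge of $e$ leaving $(u,\cdot)$ in $L_j$ ends at $(v,\cdot)$ in $L_{j+1}$. We colour each layer by some proper $q$-edge-colouring of $G-e$. If $L_j$ misses $c_j$ at $u$ and $L_{j+1}$ misses $d_{j+1}$ at $v$, we need $c_j=d_{j+1}$ for all $j\in\Z_\ell$. Call $(c,d)$ the \emph{type} of a colouring of $G-e$, meaning it misses $c$ at $u$ and $d$ at $v$.

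Start from a $q$-colouring $\varphi$ of type $(a,b)$. Because $G-e+e$ is not $q$-colourable, the missing sets at $u$ and $v$ are disjoint. As in the proof of Lemma~\ref{lem:double-transfer}, $P_u(a,b)$ ends at $v$, and the Kempe change on it gives a colouring of type $(b,a)$. Alternating the types $(a,b),(b,a)$ handles even stretches of layers, which is not enough since $\ell$ is odd. Suppose we also have colourings of types $(a,b)$, $(c,a)$, $(b,c)$ for some third colour $c$. Then these three types close up a cycle of length $3$, since $(a,b)\to(c,a)\to(b,c)\to(a,b)$ satisfies $c_j=d_{j+1}$. Combining one such block of $3$ with blocks of $2$ covers every odd $\ell\ge3$.

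\textbf{Main obstacle for Part (1).} The hard step is producing the colourings of types $(c,a)$ and $(b,c)$ with a third colour $c\ne a,b$. This is where $\Delta\ge3$ should enter. I would first try a Vizing-fan argument at $u$: take an edge $uw$ coloured $c$ and use $(c,\cdot)$-Kempe chains, using criticality to force the chain endpoints. This should move a missing colour at $u$ from $a$ to $c$ while keeping $a$ missing at $v$. The colouring of type $(b,c)$ would then come from the analogous Kempe change at $v$.

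\textbf{Part (2), necessity.} By Lemma~\ref{lem:structure}, $n$ is odd, so $\ell n$ is odd. Taking $H=\widetilde G$ in the definition of the density gives $\chi'(\widetilde G)\ge\Gamma(\widetilde G)\ge\lceil 2\ell m/(\ell n-1)\rceil$.

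\textbf{Part (2), sufficiency.} By Lemma~\ref{lem:structure}, $m-1=r\frac{n-1}{2}$ with $r=\Delta+k-1$. So a proper $r$-colouring of $G-e_0$ (which exists by criticality) has $r$ classes $M_0,\dots,M_{r-1}$, each a near-perfect matching of size $(n-1)/2$ missing exactly one vertex. Put $p=k-1-t$ and $X=\{0,\dots,r-1\}$.

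The idea is to delete $p$ colours from each layer. In layer $i$, the classes indexed by $X_i$ are recoloured by being routed across to the neighbouring layers, using voltage $1$ on their edges and on $e_0$. The condition $X_i\cap X_{i+1}=\emptyset$ guarantees that a colour freed in layer $i$ is never reused on adjacent crossing edges. The edge $e_0$ is absorbed using the one vertex left uncovered by each near-perfect matching.

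Lemma~\ref{lem:|F|-set} supplies the sets $X_i$ exactly when $\ell(r-2p)\ge r$. A routine computation using $m=r\frac{n-1}{2}+1$ should show that this condition is equivalent to $\Delta+t\ge\lceil 2\ell m/(\ell n-1)\rceil$. The lower bound $\chi'(\widetilde G)\ge\Delta+t$ then needs a separate check, namely that the constructed cover still contains a vertex of degree $\Delta$ together with enough density; I expect this to be a direct count.
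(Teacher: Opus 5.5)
\textbf{Part (2), sufficiency: genuine gap.} You apply Lemma~\ref{lem:|F|-set} to the wrong objects, and the ``routine computation'' you expect is false.

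A voltage assignment is defined on $E(G)$. So the crossing edges form the same set $F=M_{j_1}\cup\dots\cup M_{j_p}\cup\{e_0\}$ between every two consecutive layers. You cannot route different classes $X_i$ out of different layers. Also, subsets of the full palette of $\Delta+k-1$ colours do not produce a $(\Delta+t)$-colouring.

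A concrete failure: take $K_3$ with every edge doubled, so $n=3$, $m=6$, $\Delta=4$, $k=2$, $t=0$. With your parameters $r=\Delta+k-1=5$ and $p=k-1-t=1$, your condition reads $3\ell\ge 5$, which holds for every odd $\ell\ge 3$. Yet $\lceil 12\ell/(3\ell-1)\rceil=5>4$, so by your own necessity argument no cover is $4$-edge-colourable.

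The missing idea is this. The items that need pairwise disjoint consecutive colour sets are the \emph{edges} of $F$. The ground set is the palette $[s]$ of the $s=\Delta+t$ kept colours. So take $p=|F|=(k-1-t)\frac{n-1}{2}+1$.
\begin{itemize}
\item Every kept class is a near-perfect matching, so the sets $\overline{\varphi}(v)\cap[s]$ partition $[s]$, and $d_F(v)\le|\overline{\varphi}(v)\cap[s]|$ because $d_G(v)\le\Delta\le s$.
\item Hence each $f\in F$ from $u$ to $v$ can be given $a_f\in\overline{\varphi}(u)\cap[s]$ and $b_f\in\overline{\varphi}(v)\cap[s]$, with all the $a_f,b_f$ distinct.
\item Take bijections $\theta_i:F\to X_i$. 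Choose permutations $\pi_i$ of $[s]$ with $\pi_i(a_f)=\theta_i(f)=\pi_{i+1}(b_f)$. Such $\pi_i$ exist because $\pi_i$ must send $b_f\mapsto\theta_{i-1}(f)\in X_{i-1}$ and $a_f\mapsto\theta_i(f)\in X_i$, which is injective exactly because $X_{i-1}\cap X_i=\emptyset$.
\item Put voltage $1$ on $F$ and $0$ elsewhere. Colour layer $i$ by $\pi_i\circ\varphi$ and the edge $(f,i)$ by $\theta_i(f)$.
\end{itemize}
Since $s-2|F|=sn-2m$, the lemma's condition $\ell(s-2|F|)\ge s$ is precisely $s(\ell n-1)\ge 2\ell m$.

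The matching lower bound needs no degree argument. Each layer contains $H$, which has $s\frac{n-1}{2}$ edges on an odd number $n$ of vertices, so $\chi'(H)\ge s$.

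\textbf{Part (1): fine, but your ``main obstacle'' is not one.} Your reduction, lower bound, and cyclic chaining of types (one block of three plus blocks of two) agree with the paper. Your fan/Kempe sketch, however, is unproved and unnecessary. Renaming colours inside one layer keeps that layer properly coloured. So from a single colouring of type $(a,b)$, any permutation of the $q$ colours with $a\mapsto x$ and $b\mapsto y$ gives type $(x,y)$ for arbitrary $x\ne y$.

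The paper does exactly this, using the transpositions $c_2\leftrightarrow c_3$, $c_1\leftrightarrow c_2$, $c_1\leftrightarrow c_3$. The hypothesis $\Delta\ge 3$ serves only to guarantee $q\ge 3$, so that a third colour exists. A third colour is needed because the chosen colours missing at $u$, layer by layer, must properly colour the odd cycle $\Z_\ell$.
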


\begin{proof}
We first prove $(1)$. Let $e \in E(G)$, and let $\alpha:E(G) \to \Z_\ell$ be a voltage assignment with $\supp(\alpha)=\{e\}$. Set $a := \alpha(e) \ne 0$.

Let $g :=\gcd(\ell,a)$ and $d := \ell/g$. 
Since $\langle a \rangle = \langle g \rangle$ has order $d$, Lemma~\ref{lem:subgroup} implies that $\widetilde G(\alpha, \ell)$ is the disjoint union of $g$ induced subgraphs, each isomorphic to the $d$-fold cyclic cover of $G$ obtained by assigning voltage $a/g\in\Z_d$ to $e$ and voltage $0$ to every other edge. Since $\gcd(a/g,d)=\gcd(a/g, \ell/g)=1$, multiplication by the inverse of $a/g$ in $\Z_d$ gives a relabeling of the layers under which the voltage $a/g$ on $e$ becomes $1$. Hence the above $d$-fold cyclic cover is isomorphic to the cyclic cover derived from the voltage assignment $\alpha':E(G)\to\Z_d$ with
\[
\alpha'(e)=1
\quad\text{and}\quad
\alpha'(f)=0
\text{ for every } f\in E(G)\setminus\{e\}.
\]
Therefore, it suffices to show that $\widetilde G:=\widetilde G(\alpha', d)$ satisfies $\chi'(\widetilde G) = \Delta+k-1$. Note that since $\ell$ is odd, $d$ is also odd. At this point, the choice of orientation of $e$ does not affect the argument. Indeed, if $e$ is oriented from $u$ to $v$, then the lifted edges of $e$ join $(u,i)$ to $(v,i+1)$. After relabeling the layers by sending $i$ to $-i$, these edges have the form $(u,j+1)(v,j)$, where $j \in \Z_d$. This is exactly the form obtained when $e$ is oriented from $v$ to $u$ with voltage $1$. Hence we may assume that $e$ is oriented from $v$ to $u$. Then, $(e,i)$ joins $(v,i)$ and $(u,i+1)$.

Since $G$ is critical, $\chi'(G) = \Delta+k$, and $k \ge 1$, we have $\chi'(G-e)=\Delta+k-1 \ge \Delta$. Let $\varphi_G: E(G-e) \to [\Delta+k-1]$ be a proper $(\Delta+k-1)$-edge-coloring.
Then, $$|\overline{\varphi_G}(u)| \ge \Delta-d_G(u)+1 \ge 1 \quad \text{and} \quad |\overline{\varphi_G}(v)| \ge \Delta-d_G(v) + 1 \ge 1.$$  Moreover, $\overline{\varphi_G}(u) \cap \overline{\varphi_G}(v) = \emptyset$, for otherwise $e$ could be colored and $G$ would be $(\Delta+k-1)$-edge-colorable. Thus there exist distinct colors $c_1\in \overline{\varphi_G}(v)$ and $c_2 \in \overline{\varphi_G}(u)$.
Since $\Delta+k-1 \ge \Delta \ge 3$, we may find a third color $c_3 \in [\Delta+k-1]\setminus\{c_1,c_2\}$. 

For each $i \in \Z_d$, let $G_i := \widetilde G[V_i]$. By the definition of $\alpha'$, $G_i \cong G-e$. We define a proper $(\Delta+k-1)$-edge-coloring $\varphi_i$ of $G_i$ from
$\varphi_G$ by permuting the colors as follows:
\[
\begin{array}{c|c}
i & \text{permutation of colors}\\ \hline
0 & c_2\leftrightarrow c_3\\
i\in\{1,3,\dots,d-2\} & c_1\leftrightarrow c_2\\
i\in\{2,4,\dots,d-3\} & \text{identity}\\
d-1 & c_1\leftrightarrow c_3
\end{array}
\]
Consequently, the colors missing at $(v,i)$ and
$(u,i)$ are, respectively,
\[
\begin{array}{c|c|c}
i & (v,i) & (u,i)\\ \hline
0 & c_1 & c_3\\
i\in\{1,3,\dots,d-2\} & c_2 & c_1\\
i\in\{2,4,\dots,d-3\} & c_1 & c_2\\
d-1 & c_3 & c_2
\end{array}
\]
We now color the lifted edges of $e$. For each $i\in \{0,1,\dots,d-2\}$, assign color $c_1$ to $(v,i)(u,i+1)$ when $i$ is even and color $c_2$ when $i$ is odd. For the remaining edge $(v,d-1)(u,0)$, assign color $c_3$. Clearly, each assigned color is missing at both endpoints of the corresponding edge. Therefore, together with the colorings $\varphi_i$, we obtain a proper $(\Delta+k-1)$-edge-coloring of $\widetilde G$. Since $\chi'(\widetilde G) \ge \chi'(G-e) = \Delta+k-1$, we have $\chi'(\widetilde G) = \Delta+k-1$. Therefore, $\chi'(\widetilde G(\alpha, \ell)) = \Delta+k-1 = \chi'(G)-1$.

We now prove $(2)$. Denote $q:=\Delta+k$ and $r:= \Delta+t$. 
Suppose that $\Delta + t \ge \left\lceil\frac{2\ell m}{\ell n-1}\right\rceil$.
Let $e \in E(G)$, and let $\varphi_G:E(G-e) \to [q-1]$ be a proper $(q-1)$-edge-coloring. This coloring induces a decomposition
$$E(G-e)=M_1\dot\cup \cdots \dot\cup M_{q-1},$$
where $M_i$ is the color class receiving color $i$.
By Lemma~\ref{lem:structure}, $m-1=(q-1)\tfrac{n-1}{2}$ and $n$ is odd. Since $|M_i| \le (n-1)/2$ for each $i$, each $M_i$ must have size exactly $(n-1)/2$. Thus each $M_i$ is a near-perfect matching of $G$, that is, each $M_i$ misses exactly one vertex of $G$. Let $H$ be the spanning subgraph of $G$ with $E(H)=M_1\dot\cup M_2 \dot\cup \cdots \dot\cup M_r$ and $F = E(G) \setminus E(H)$. Then, $$|E(H)| = r\frac{n-1}{2} \quad \text{and}\quad |F| = m-r\frac{n-1}{2} = (k-t-1)\frac{n-1}{2}+1.$$
Clearly, $\chi'(H) = r$. 

Since each color class is a near-perfect matching, each color in $[r]$ is missing at exactly one vertex of $G$. Thus the sets $\overline{\varphi_G}(v) \cap [r]$ for $v\in V(G)$ form a partition of $[r]$. Moreover, for each $v \in V(G)$, $d_H(v) = r-|\overline{\varphi_G}(v) \cap [r]|$, and hence
$$d_{G[F]}(v) = d_G(v) - d_H(v) = d_G(v) - r + \left|\overline{\varphi_G}(v) \cap [r]\right| \le \Delta -r +\left|\overline{\varphi_G}(v) \cap [r]\right| \le \left|\overline{\varphi_G}(v) \cap [r]\right|.$$
For each $v \in V(G)$, let $E_{G[F]}(v)$ be the set of edges in $F$ incident with $v$. We may choose an injection 
$$\eta_v:E_{G[F]}(v) \to \overline{\varphi_G}(v) \cap [r].$$ 
For each $f \in F$, say from $u$ to $v$, set $a_f = \eta_u(f)$ and $b_f = \eta_v(f)$. Since the sets $\overline{\varphi_G}(v) \cap [r]$ with $v \in V(G)$ are pairwise disjoint and each $\eta_v$ is injective, the sets $\{a_f, b_f\}$, $f\in F$, are pairwise disjoint. 

Since $r= \Delta+t \ge \left\lceil\frac{2\ell m}{\ell n-1}\right\rceil$, we have $\ell(rn-2m) \ge r$. Moreover, $|F| = m-r\tfrac{n-1}{2}$. So $rn-2m = r-2|F|$, and hence $\ell(r-2|F|) \ge r$. Applying Lemma~\ref{lem:|F|-set} with $X=[r]$ and $p = |F|$, we obtain $|F|$-element subsets $X_0, X_1, \dots, X_{\ell-1} \subseteq [r]$ such that $X_i \cap X_{i+1} = \emptyset$ for every $i \in \Z_\ell$, where indices are taken modulo $\ell$. For each $i \in \Z_\ell$, choose a bijection $\theta_i: F \to X_i$, and then define a map $\psi_i: \{a_f \,\mid\, f\in F\} \cup \{b_f \,\mid\, f\in F\} \to X_i \cup X_{i-1}$ by 
$$\psi_i(a_f) = \theta_i(f) \quad \text{and} \quad \psi_i(b_f) =\theta_{i-1}(f),$$ for every $f \in F$. Since $\theta_i, \theta_{i-1}$ are bijections and $\{a_f \,\mid\, f\in F\} \cap \{b_f \,\mid\, f\in F\} = \emptyset = X_i\cap X_{i-1}$, the map $\psi_i$ is a bijection. Notice that for each $i \in \Z_\ell$, $|[r]\setminus\{a_f, b_f \,\mid\, f \in F\}| = r - 2|F| = |[r]\setminus(X_i\cup X_{i-1})|$, and thus $\psi_i$ can be extended to a permutation $\pi_i$ of $[r]$. 
By construction, for every $f \in F$, $\pi_i(a_f) = \theta_i(f)$ and $\pi_{i+1}(b_f) = \theta_i(f)$. Consequently, $\pi_i(a_f) = \pi_{i+1}(b_f)$ for every $f \in F$ and $i \in \Z_\ell$.

Now we define the voltage assignment $\alpha:E(G) \to \Z_\ell$ by 
\[
\alpha(e)=
\begin{cases}
0, & \text{if } e \in E(H),\\
1, & \text{if } e \in F.
\end{cases}
\]
Clearly, $\supp(\alpha) = F$. For each $i \in \Z_\ell$, $\widetilde G(\alpha, \ell)[V_i] \cong H$, and for each $f \in F$, say from $u$ to $v$, the lifted edge $(f,i)$ joins $(u,i)$ and $(v,i+1)$.
Let $\varphi_H$ be the proper $r$-edge-coloring of $H$ obtained by restricting the coloring $\varphi_G$ to $E(H)$. For each $e \in E(G)$ and $i\in \Z_\ell$, define 
\[
\varphi_{\widetilde G}((e,i))=
\begin{cases}
\pi_i(\varphi_H(e)), & \text{if } e \in E(H),\\
\pi_i(a_e) = \theta_i(e) = \pi_{i+1}(b_e), & \text{if } e \in F.
\end{cases}
\]
We claim this defines a proper $r$-edge-coloring of $\widetilde G(\alpha, \ell)$. Suppose not. Then there exist two distinct edges of $\widetilde G(\alpha, \ell)$ incident with some vertex $(v,i)$ that receive the same color. Denote these two edges by $(e_1, j_1)$ and $(e_2, j_2)$. Then, $e_1$ and $e_2$ are both incident with $v$ in $G$. We consider three cases. 

If $e_1, e_2 \in E(H)$, then $j_1 = j_2 = i$. As $\pi_i$ is a permutation of $[r]$, it follows that $\varphi_H(e_1) = \varphi_H(e_2)$, contradicting the fact that $\varphi_H$ is proper. 

If $e_1, e_2 \in F$, then $j_1, j_2 \in \{i-1, i\}$, and $\eta_v(e_1) = \eta_v(e_2)$, as $\pi_i$ is a permutation. Since $\eta_v$ is an injection, we have $e_1 = e_2$, a contradiction. 

In the remaining case, one of $\{e_1, e_2\}$ lies in $E(H)$ and the other lies in $F$. By symmetry, assume that $e_1 \in E(H)$ and $e_2 \in F$. For the same reason, we obtain $\varphi_H(e_1) = \eta_v(e_2)$. However, $\varphi_H(e_1) \in \varphi_G(v)$, while $\eta_v(e_2) \in \overline{\varphi_G}(v)$, a contradiction. 

Thus no two distinct lifted edges receive the same color, so $\varphi_{\widetilde G}$ is proper. Hence $\chi'(\widetilde G(\alpha, \ell)) \le r$. Since each layer contains $H$ and $\chi'(H) = r$, we have $\chi'(\widetilde G(\alpha, \ell)) = r$.

It remains to show the necessity. By Lemma~\ref{lem:structure}, $n$ is odd. Since $\ell$ is also odd, every $\ell$-fold cyclic cover of $G$ has $\ell n$ vertices, which is odd. Hence each matching in the cyclic cover has size at most $(\ell n -1)/2$. If the cyclic cover admits an $(\Delta+t)$-edge-coloring, then 
$$\Delta+t \ge \left\lceil\frac{\ell m}{(\ell n -1)/2}\right\rceil = \left\lceil\frac{2\ell m}{\ell n -1}\right\rceil.$$ 
The proof is now complete.
\end{proof}

\section{Proof of Theorem~\ref{thm:vt-any-number-Zell}}\label{sec:vertex}
In this section, we focus on vertex-coloring and answer Question~\ref{q2}. Unlike edge-coloring, vertex-coloring is unaffected by parallel edges. Hence, for a multigraph $G$, the chromatic number of $G$ depends only on its underlying simple graph. Therefore, throughout this section, we may assume that all graphs are simple and have at least one edge.

For $\ell = 2$, the $2$-fold cyclic cover considered in this paper corresponds to the double covers studied by Waller~\cite{Waller1976}, after choosing labels $(0,1)$ on the two vertices in each fiber. Waller showed that a double cover has chromatic number at most that of the base graph. Our vertex-coloring result extends and refines this observation to voltage assignments with values in $\Z_\ell$, for arbitrary $\ell \ge 2$. We first introduce an observation on how the chromatic number changes when the voltage assignment is changed on a single edge.  

\begin{observation}\label{obs:single-edge-vertex}
Let $\ell \ge 2$ be an integer and let $G$ be a graph. Suppose that $\alpha, \alpha' :E(G) \to \Z_\ell$ differ on exactly one edge. Then,
$$
\left|\chi(\widetilde G(\alpha,\ell)) - \chi(\widetilde G(\alpha', \ell))\right| \le 1.
$$
\end{observation}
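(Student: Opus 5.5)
The plan is to exploit the symmetry between $\alpha$ and $\alpha'$: it suffices to show $\chi(\widetilde G(\alpha',\ell)) \le \chi(\widetilde G(\alpha,\ell)) + 1$, since exchanging the roles of $\alpha$ and $\alpha'$ then yields the reverse inequality. Let $e$ be the unique edge on which $\alpha$ and $\alpha'$ differ, oriented from $u=t(e)$ to $v=h(e)$. Write $\widetilde G := \widetilde G(\alpha,\ell)$ and $\widetilde G' := \widetilde G(\alpha',\ell)$.

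The first step is to compare the two covers directly. They share the same vertex set $V(G)\times\Z_\ell$. For every $f\neq e$, the lifted edges $(f,i)$ join the same pairs of vertices in both covers. Hence $\widetilde G - \widetilde e = \widetilde G' - \widetilde e$; call this common graph $K$. Consequently, $\widetilde G'$ is obtained from $K$ by adding the $\ell$ edges $(u,i)(v,i+\alpha'(e))$, $i\in\Z_\ell$.

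Next, take an optimal proper coloring $c$ of $\widetilde G$ using $k:=\chi(\widetilde G)$ colors. Its restriction to $K$ is proper. The only edges of $\widetilde G'$ that could be monochromatic under $c$ are among the new lifted edges of $e$. Each such edge has one endpoint in the fiber $\{u\}\times\Z_\ell$. Recolor every vertex $(u,i)$ with a new color $k+1$.

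It then remains to verify that this recoloring is proper on $\widetilde G'$. The fiber $\{u\}\times\Z_\ell$ is an independent set in $\widetilde G'$, because $G$ is loopless, so every lifted edge joins lifted vertices of two distinct base vertices. Therefore no edge joins two vertices of color $k+1$. Every other edge either lies in $K$ with both endpoints keeping their colors from $c$, or has exactly one endpoint recolored to $k+1$, which differs from all old colors. This gives $\chi(\widetilde G')\le k+1$.

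There is no real obstacle here. The one point needing care is the independence of the fiber over $u$, which relies on looplessness; parallel edges are irrelevant for vertex coloring.
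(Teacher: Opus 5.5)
Your proof is correct and is essentially the paper's argument. The paper first states a general fact: recolor, with one new color, a set $U$ that is independent in both graphs and meets every edge of the symmetric difference. It then applies this with $U=\{u\}\times\Z_\ell$, which is exactly what you do directly, including the symmetry step.
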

\begin{proof}
It suffices to prove the following fact. Let $H$ and $H'$
be graphs on the same vertex set. Suppose that there is a set $U$
that is independent in both $H$ and $H'$, and that every edge in
$E(H)\triangle E(H')$ is incident with a vertex of $U$. Then, $|\chi(H)-\chi(H')|\le 1$.
Indeed, let $\varphi$ be a proper $\chi(H)$-vertex-coloring of $H$. Recolor every vertex of $U$ with a new color. Since $U$ is independent in $H'$, no
edge of $H'$ has both endpoints in $U$. Moreover, every edge of $H'$ with both endpoints outside $U$ is also an edge of $H$, and hence its endpoints receive distinct colors under $\varphi$. Thus the resulting coloring is a proper $(\chi(H)+1)$-vertex-coloring of $H'$. Therefore $\chi(H')\le \chi(H)+1$.
By symmetry, $\chi(H)\le \chi(H')+1$, and the result follows. 

Let $e = uv$ be the unique edge on which $\alpha$ and $\alpha'$ differ. Applying the fact with $U=\{u\}\times \Z_\ell$, we obtain the desired result.
\end{proof}

\begin{proof}[Proof of Theorem~\ref{thm:vt-any-number-Zell}]
Let $k \in \{3,\dots,\chi(G)\}$. Since $\chi(G) \ge k \ge 3$, $G$ is non-bipartite.
Choose a spanning subgraph $H\subseteq G$ with $\chi(H) = 3$. Such $H$ does exist because $\chi(G) \geq 3$ implies that $G$ contains an odd cycle. Taking this cycle together with the remaining vertices as isolated vertices yields a spanning subgraph $H$ with $\chi(H) = 3$. Let $\varphi_G: V(G)\to \Z_3$ be a $3$-vertex-coloring of $G$ whose restriction to $H$ is proper.

Since every cycle of length $\ell$ is $3$-vertex-colorable, there exists a map $\psi:\Z_\ell \to \Z_3$ such that
\[
\psi(i)\ne \psi(i+1) \quad \text{for all } i \in \Z_\ell.
\]

Define $\alpha: E(G) \to \mathbb Z_\ell$ by 
\[
\alpha(uv)=
\begin{cases}
0, & \text{if } \varphi_G(u) \ne \varphi_G(v),\\
1, & \text{if } \varphi_G(u) = \varphi_G(v),
\end{cases}
\]
for every $uv \in E(G)$.
Now define a vertex-coloring $\varphi_{\widetilde G}$ of $\widetilde G := \widetilde G(\alpha, \ell)$ by $\varphi_{\widetilde G}((v,i))=\varphi_G(v)+\psi(i) \pmod 3$ for every $v \in V(G)$ and $i \in \mathbb Z_\ell$. 
We claim that $\varphi_{\widetilde G}$ is a proper $3$-vertex-coloring of $\widetilde G$. Let $(u,i)(v,i+\alpha(uv))$ be an edge of $\widetilde G$. If $\alpha(uv) = 0$, then $\varphi_G(u) \ne \varphi_G(v)$, and hence $$\varphi_{\widetilde G}((u,i)) = \varphi_G(u)+\psi(i) \ne \varphi_G(v)+\psi(i) = \varphi_{\widetilde G}((v,i)).$$ If $\alpha(uv) = 1$, then $\varphi_G(u) = \varphi_G(v)$, and hence $$\varphi_{\widetilde G}((u,i)) = \varphi_G(u)+\psi(i) \ne \varphi_G(v) + \psi(i+1) = \varphi_{\widetilde G}((v,i+1)).$$ Thus $\varphi_{\widetilde G}$ is proper, so $\chi(\widetilde G) \le 3$. On the other hand, since $\varphi_G$ is proper on $H$, every edge of $H$ receives voltage $0$, so each layer contains a copy of $H$. Hence $\chi(\widetilde G) \ge \chi(H) = 3$. Together, we obtain $\chi(\widetilde G) = 3$. 

Let $\alpha_0$ be the all-zero voltage assignment, so that $\widetilde G(\alpha_0, \ell)$ has chromatic number $\chi(G)$.
Order the edges on which $\alpha_0$ and $\alpha$ differ, and change the voltage assignment from $\alpha_0$ to $\alpha$ one edge at a time. By Observation~\ref{obs:single-edge-vertex}, the chromatic number of the corresponding cyclic cover changes by at most one at each step. Since it starts at $\chi(G)$ and ends at $3$, every value in $\{3,\dots,\chi(G)\}$ occurs as $\chi(\widetilde G(\alpha,\ell))$ for some $\Z_\ell$-voltage assignment $\alpha$. This proves $(i)$.

For $(ii)$, if $G$ is bipartite with bipartition $(X,Y)$, then for any voltage assignment $\alpha$, $X\times\Z_\ell$ and $Y\times\Z_\ell$ form a bipartition of $\widetilde G(\alpha, \ell)$. Hence $\chi(\widetilde G(\alpha, \ell)) = 2$. If $\ell$ is even, assigning voltage $1$ to every edge makes every lifted edge join layers of opposite parity. Thus the resulting cyclic cover is bipartite, and hence has chromatic number $2$.
Conversely, suppose that $G$ is non-bipartite and $\ell$ is odd. Then $G$ contains an odd cycle $C$. For any voltage assignment $\alpha$, every vertex of $\widetilde C := \widetilde C\left(\alpha|_{E(C)}, \ell\right)$ has degree $2$, and hence $\widetilde C$ is a disjoint union of cycles. Moreover, $|E(\widetilde C)| = \ell|E(C)|$, which is odd as both $|E(C)|$ and $\ell$ are odd. This implies that $\widetilde C$, and hence $\widetilde G(\alpha, \ell)$, contains an odd cycle. Therefore $\widetilde G(\alpha, \ell)$ is not bipartite, so $\chi(\widetilde G(\alpha, \ell)) > 2$. This completes the proof.
\end{proof}

\section{Concluding remarks}\label{sec:concluding}
We conclude the paper by discussing how edge-coloring classes behave under cyclic covers and proposing a conjecture on the chromatic index of cyclic covers of arbitrary loopless graphs for odd $\ell$.

For simple graphs, following Vizing's Theorem~\cite{Vizing1964}, a graph $G$ is said to be of \emph{class one} if $\chi'(G) = \Delta(G)$ and of \emph{class two} if $\chi'(G) = \Delta(G)+1$.
For multigraphs, the confirmed Goldberg-Seymour Conjecture gives $\chi'(G) \in \{\max\{\Delta(G), \Gamma(G)\}, \max\{\Delta(G)+1, \Gamma(G)\}\}$.
Following~\cite{SSTF2012GraphEC}, we say that a graph $G$ is of the \emph{first class} if $\chi'(G) = \max\{\Delta(G), \Gamma(G)\}$, and of the \emph{second class} if $\chi'(G)=\Delta(G)+1$ and $\Gamma(G) \le \Delta(G)$.

Since the chromatic index of a cyclic cover does not exceed that of its base graph, every $\ell$-fold cyclic cover of a simple graph that is of class one is also of class one. For multigraphs, however, the situation is different: a graph of the first class may have an $\ell$-fold cyclic cover of the second class. To construct such examples, we first state a useful observation on density.

\begin{observation}\label{obs:bridge-density}
Let $r$ be a positive integer. Let $G_1$ and $G_2$ be vertex-disjoint graphs such that $\Delta(G_i) \le r$ and $\Gamma(G_i) \le r$ for each $i=1,2$. Suppose that, for each $i = 1,2$, there exists a vertex $x_i \in V(G_i)$ such that $d_{G_i}(x_i)\le r-1$. Then the graph $G$ obtained from the union of $G_1$ and $G_2$ by adding one edge $x_1x_2$ satisfies $\Gamma(G) \le r$.
\end{observation}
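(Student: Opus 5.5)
We must show that for every $H\subseteq G$ with $|H|\ge 2$, $|E(H)|\le r\lfloor |H|/2\rfloor$. It suffices to do this for induced subgraphs $G[W]$, $W\subseteq V(G)$ with $|W|\ge 2$, since adding edges only increases the left side. Write $W_i:=W\cap V(G_i)$ for $i=1,2$. If the bridge $x_1x_2$ is not contained in $G[W]$, then $E(G[W])=E(G_1[W_1])\cup E(G_2[W_2])$. By $\Gamma(G_i)\le r$ (trivially when $|W_i|\le 1$) we get
\[
|E(G[W])|\le r\lfloor |W_1|/2\rfloor + r\lfloor |W_2|/2\rfloor \le r\lfloor |W|/2\rfloor .
\]
This is the same superadditivity of $\lfloor\cdot/2\rfloor$ used in the proof of Theorem~\ref{thm:density}.

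The remaining case is $x_1\in W_1$ and $x_2\in W_2$, where we need to absorb the one extra edge. If $|W_1|$ and $|W_2|$ are both odd, then $\lfloor |W_1|/2\rfloor+\lfloor |W_2|/2\rfloor = \lfloor |W|/2\rfloor -1$. Then
\[
|E(G[W])|\le r(\lfloor |W|/2\rfloor-1)+1\le r\lfloor |W|/2\rfloor
\]
since $r\ge 1$. Otherwise some $|W_i|$ is even, say $|W_1|$ (the argument is symmetric). The key step is to show that in this case
\[
|E(G_1[W_1])|\le r|W_1|/2 - 1/2,
\]
so that $|E(G_1[W_1])|\le r|W_1|/2-1$ by integrality. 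This should follow from the degree bound alone: $2|E(G_1[W_1])|\le \sum_{w\in W_1} d_{G_1}(w)\le r(|W_1|-1)+(r-1) = r|W_1|-1$, using $d_{G_1}(x_1)\le r-1$ and $\Delta(G_1)\le r$. Combining with $|E(G_2[W_2])|\le r\lfloor |W_2|/2\rfloor$ and adding the bridge gives
\[
|E(G[W])|\le r|W_1|/2 + r\lfloor |W_2|/2\rfloor \le r\lfloor |W|/2\rfloor,
\]
because $|W_1|$ is even.

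The only subtle point is this even case, where $\Gamma(G_1)\le r$ alone does not leave room for the bridge. I expect the degree-sum bound with the deficient vertex $x_1$ to be exactly what supplies the missing unit, and to be the main step to verify carefully. The case $|W_1|=0$ cannot occur here since $x_1\in W_1$, and $|W_i|=1$ is handled by the same estimates with the trivial bound $0$ edges.
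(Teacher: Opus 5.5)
Your proof is correct and complete. The paper states this observation without proof, so there is no argument of theirs to compare against. Your three-case split is exactly the verification a reader is expected to supply:
\begin{itemize}
\item the bridge is not contained in $G[W]$;
\item $|W_1|$ and $|W_2|$ are both odd, where the slack $\lfloor |W_1|/2\rfloor+\lfloor |W_2|/2\rfloor=\lfloor |W|/2\rfloor-1$ absorbs the bridge;
\item some $|W_i|$ is even.
\end{itemize}

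The step you flagged as delicate goes through as written. Since $x_1\in W_1$, the degree sum gives $2|E(G_1[W_1])|\le r|W_1|-1$. Because $r|W_1|$ is even, this forces $|E(G_1[W_1])|\le r|W_1|/2-1$. In that even case you only need $\Delta(G_1)\le r$ and $d_{G_1}(x_1)\le r-1$ on the even side. The hypothesis $\Gamma(G_i)\le r$ is used only for the other side and in the first two cases.
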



\begin{example}
Let $r\ge 2$, and let $G_1$ and $G_2$ be connected graphs satisfying
\[
\Delta(G_1)=r,\ \Gamma(G_1) = \chi'(G_1)= r+1, \text{ and }
\max\{\Delta(G_2),\Gamma(G_2)\}=r,\  \chi'(G_2)=r+1.
\]
Assume further that $G_1$ has a vertex $x_1$ with $d_{G_1}(x_1) \le r-1$ and that $G_2$ has a vertex $x_2$ with $d_{G_2}(x_2) \le r-1$. Let $G$ be obtained from the disjoint union of $G_1$ and $G_2$ by adding one edge $x_1x_2$. It is easy to see that $\Delta(G)=r$ and $\Gamma(G) = \chi'(G) = r+1$, so $G$ is of the first class.

Now let $\ell = 2$ and define a voltage assignment $\alpha:E(G)\to \Z_2$ by
\[
\alpha(e)=
\begin{cases}
1, & e\in E(G_1),\\
0, & e\notin E(G_1).
\end{cases}
\]
Let $\widetilde G := \widetilde G(\alpha, 2)$. Clearly, $\Delta(\widetilde G) = \Delta(G) = r$.
Moreover, the subgraph $\widetilde G_1$ of $\widetilde G$ is bipartite and has maximum degree $r$, and hence $\Gamma(\widetilde G_1) \le r$. The subgraph $\widetilde G_2$ consists of two disjoint copies of $G_2$. Since $\alpha(x_1x_2)=0$, the lifted edges of $x_1x_2$ are $(x_1,0)(x_2,0)$ and $(x_1,1)(x_2,1)$, which join $\widetilde G_1$ with the two copies of $G_2$, respectively, at vertices of degree at most $r-1$. Applying Observation~\ref{obs:bridge-density} twice, we obtain $\Gamma(\widetilde G)\le r$.
Since $\widetilde G$ contains a copy of $G_2$, we have $r+1 = \chi'(G_2) \le \chi'(\widetilde G) \le \chi'(G)=r+1$. 
Thus $\chi'(\widetilde G)=r+1$.
Therefore $\chi'(\widetilde G) > \max\{\Delta(\widetilde G),\Gamma(\widetilde G)\} =r$, and hence $\widetilde G$ is of the second class. The following is an example with $r = 3$.

\begin{figure}[H]
\centering
\begin{tikzpicture}[
    vertex/.style={circle, draw, fill=white, inner sep=1.6pt},
    degtwo/.style={circle, draw, fill=gray!25, inner sep=1.8pt},
    edge/.style={line width=0.8pt},
    bridge/.style={line width=1.2pt, dashed}
]
\node[vertex] (a1) at (0,1) {};
\node[vertex] (a2) at (0,-1) {};
\node[degtwo] (a3) at (1.6,0) {};
\draw[edge] (a1) to[out=210,in=150] (a2);
\draw[edge] (a1) to[out=-30,in=30] (a2);
\draw[edge] (a1) -- (a3);
\draw[edge] (a2) -- (a3);
\node at (0.65,-2.15) {$G_1$};

\coordinate (O) at (5.2,0);
\foreach \i/\ang in {1/108,2/36,3/-36,4/-108}{
    \node[vertex] (u\i) at ($(O)+(\ang:1.75)$) {};
}
\foreach \i/\ang in {0/180,1/108,2/36,3/-36,4/-108}{
    \ifnum\i=0
        \node[degtwo] (v\i) at ($(O)+(\ang:0.78)$) {};
    \else
        \node[vertex] (v\i) at ($(O)+(\ang:0.78)$) {};
    \fi
}
\draw[edge] (u1) -- (u2) -- (u3) -- (u4);
\foreach \i in {1,2,3,4}{
    \draw[edge] (u\i) -- (v\i);
}
\draw[edge] (v0) -- (v2);
\draw[edge] (v2) -- (v4);
\draw[edge] (v4) -- (v1);
\draw[edge] (v1) -- (v3);
\draw[edge] (v3) -- (v0);
\node at (5.2,-2.15) {$G_2$};
\draw[edge] (a3) -- (v0);
\node at ($(a3)+(0.18, 0.28)$) {$x_1$};
\node at ($(v0)+(-0.18, 0.28)$) {$x_2$};
\end{tikzpicture}
\caption{Example for $r = 3$}
\end{figure}
\end{example}

We now consider the chromatic index of cyclic covers for odd values of $\ell$. We first give the following analogue of Observation~\ref{obs:single-edge-vertex} for the chromatic index.
\begin{observation}\label{obs:single-edge}
Let $\ell \ge 2$ be an integer, and let $G$ be a loopless graph. Suppose that $\alpha, \alpha' :E(G) \to \Z_\ell$ differ on exactly one edge. Then,
$$
\left|\chi'(\widetilde G(\alpha, \ell)) - \chi'(\widetilde G(\alpha', \ell))\right| \le 1.
$$
\end{observation}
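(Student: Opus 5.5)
By symmetry between $\alpha$ and $\alpha'$, it suffices to prove $\chi'(\widetilde G(\alpha',\ell)) \le \chi'(\widetilde G(\alpha,\ell)) + 1$. Write $\widetilde G := \widetilde G(\alpha,\ell)$ and $\widetilde G' := \widetilde G(\alpha',\ell)$, and let $e$ be the unique edge on which the two assignments differ. Since $E(\widetilde G) = E(\widetilde G') = E(G)\times\Z_\ell$, the two covers share every lifted edge $(f,i)$ with $f\neq e$, and these shared edges have the same endpoints in both covers. The only difference is the set $\widetilde e$ of $\ell$ lifted edges of $e$.

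Start from a proper $k$-edge-coloring $\varphi$ of $\widetilde G$, where $k=\chi'(\widetilde G)$. Restrict it to $\widetilde G - \widetilde e = \widetilde G' - \widetilde e$. This restriction is a proper $k$-edge-coloring of $\widetilde G'-\widetilde e$, since deleting edges preserves properness.

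Next, look at $\widetilde e$ inside $\widetilde G'$. Suppose $e$ joins $u$ and $v$. Each lifted edge $(e,i)$ joins $(t(e),i)$ to $(h(e),i+\alpha'(e))$. For fixed $u$ the tail endpoints $(u,i)$ are distinct as $i$ varies, and the same holds for the head endpoints. Moreover, because $G$ is loopless, $u\neq v$, so no lifted vertex of $u$ equals a lifted vertex of $v$. Hence every vertex of $\widetilde G'$ is incident with at most one edge of $\widetilde e$, so $\widetilde e$ is a matching in $\widetilde G'$.

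Finally, give every edge of $\widetilde e$ a single new color $k+1$. Since $\widetilde e$ is a matching and color $k+1$ is used nowhere else, the result is a proper $(k+1)$-edge-coloring of $\widetilde G'$. This gives the desired inequality, and exchanging the roles of $\alpha$ and $\alpha'$ gives the reverse one.

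The only point that needs care is that $\widetilde e$ really is a matching. This is exactly where looplessness is used: for a loop, a cyclic shift could put two lifted edges at the same vertex. No other step is substantive.
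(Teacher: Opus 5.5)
Your proof is correct and is essentially the paper's argument: the lifted edges of $e$ form a matching in each cover, and deleting them leaves a common graph whose coloring extends to the other cover by giving that matching one new color, with symmetry supplying the reverse inequality. Your explicit check that $\widetilde e$ is a matching (distinct tails, distinct heads, and $u\neq v$ by looplessness) spells out a step the paper asserts without comment.
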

\begin{proof}
    Suppose that $\alpha$ and $\alpha'$ differ only on an edge $e$, say from $u$ to $v$. 
    Let $$M_\alpha = \{(u,i)(v, i+\alpha(e)) \,\mid\, i\in \Z_\ell)\} \quad\text{and}\quad M_{\alpha'} = \{(u,i)(v, i+\alpha'(e)) \,\mid\, i \in \Z_\ell\}.$$
    Both $M_\alpha$ and $M_{\alpha'}$ are matchings, and after deleting the lifted edges corresponding to $e$, the two graphs $\widetilde G(\alpha, \ell)-M_\alpha$ and $\widetilde G(\alpha', \ell)-M_{\alpha'}$ are identical. 
    Let $p:=\chi'(\widetilde G(\alpha, \ell))$. Then, $\widetilde G(\alpha, \ell)-M_\alpha$ admits a proper $p$-edge-coloring. 
    Now add the edges of $M_{\alpha'}$ and assign all of them one new color. Since $M_{\alpha'}$ is a matching, this gives a proper $(p+1)$-edge-coloring of $\widetilde G(\alpha', \ell)$. Hence $\chi'(\widetilde G(\alpha', \ell)) \le \chi'(\widetilde G(\alpha, \ell))+1$. Interchanging the roles of $\alpha$ and $\alpha'$ gives $\chi'(\widetilde G(\alpha, \ell)) \le \chi'(\widetilde G(\alpha', \ell)) + 1$. The result follows.
\end{proof}
This yields the following corollary.
\begin{corollary}
    Let $\ell \ge 3$ be odd and let $G$ be a loopless graph. Define $$s_\ell(G) := \min_{\alpha:E(G) \to \Z_\ell} \chi'(\widetilde G(\alpha, \ell)).$$ Then 
    $\{\chi'(\widetilde G(\alpha,\ell))\,\mid\,\alpha:E(G)\to \Z_\ell\} =\{s_\ell(G), s_\ell(G)+1,\dots, \chi'(G)\}$.
\end{corollary}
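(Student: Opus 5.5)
The plan is to realize the spectrum as the set of values along a single walk in the space of voltage assignments, in which consecutive assignments differ on one edge. Observation~\ref{obs:single-edge} then ensures the chromatic index moves by at most one per step, so the walk cannot skip a value. The same device was used in the proof of Theorem~\ref{thm:vt-any-number-Zell}(i).

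First I will fix the two endpoints of the walk. Let $\alpha_0$ be the all-zero voltage assignment. Then $\widetilde G(\alpha_0,\ell)$ is the disjoint union of $\ell$ copies of $G$, so $\chi'(\widetilde G(\alpha_0,\ell))=\chi'(G)$. Let $\alpha^*$ be an assignment attaining the minimum in the definition of $s_\ell(G)$; such an assignment exists because there are only finitely many voltage assignments. Thus $\chi'(\widetilde G(\alpha^*,\ell))=s_\ell(G)$.

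Next I will establish the inclusion of the attained values in $\{s_\ell(G),\dots,\chi'(G)\}$. For every $\alpha$ we have $\chi'(\widetilde G(\alpha,\ell))\ge s_\ell(G)$ by the definition of the minimum. The upper bound $\chi'(\widetilde G(\alpha,\ell))\le\chi'(G)$ holds by the lifting of color classes $M_j\times\Z_\ell$ shown in the introduction.

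For the reverse inclusion, I will list the edges $e_1,\dots,e_p$ on which $\alpha_0$ and $\alpha^*$ differ. For $0\le j\le p$, define $\alpha_j$ to agree with $\alpha^*$ on $e_1,\dots,e_j$ and to be $0$ elsewhere. Then $\alpha_p=\alpha^*$, and consecutive assignments $\alpha_j,\alpha_{j+1}$ differ on exactly one edge. By Observation~\ref{obs:single-edge}, the integer sequence $\chi'(\widetilde G(\alpha_j,\ell))$ starts at $\chi'(G)$, ends at $s_\ell(G)$, and changes by at most one at each step. By the discrete intermediate value principle, it therefore takes every integer value in between.

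There is no real obstacle here, since the substance is Observation~\ref{obs:single-edge}. The only points needing care are that $s_\ell(G)$ is actually attained, and the degenerate case $p=0$, where $s_\ell(G)=\chi'(G)$ and the claimed set is a singleton. The parity assumption that $\ell$ is odd is not used; it only indicates that for odd $\ell$ the value $s_\ell(G)$ may exceed $\Delta(G)$.
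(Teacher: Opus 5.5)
Your proposal is correct and follows the paper's intended argument. The paper states this corollary as an immediate consequence of Observation~\ref{obs:single-edge}: move from the all-zero assignment to a minimizing assignment one edge at a time, exactly as in the proof of Theorem~\ref{thm:vt-any-number-Zell}(i). You are also right that the oddness of $\ell$ plays no role in the argument.
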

When $G$ is a critical graph with $\Delta(G) \ge 3$ and $\chi'(G) \ge \Delta(G) + 1$, Theorem~\ref{thm:s-critical-graph} gives 
$$
s_\ell(G) = \min\left\{\chi'(G)-1,\max\left\{\Delta(G),\left\lceil \frac{2\ell |E(G)|}{\ell |G|-1}\right\rceil\right\}\right\}.
$$
It is natural to ask whether $s_\ell(G)$ admits a similar formula for arbitrary loopless graphs. To this end, define
$$L_\ell(G):= \max\left\{\Delta(G), \max_{\substack{S\subseteq V(G)\\ |S|\ge 3\text{ odd}}} \left\lceil \frac{2\ell |E(G[S])|}{\ell |S|-1} \right\rceil \right\}.$$
For every voltage assignment $\alpha:E(G)\to \Z_\ell$, we have $\chi'(\widetilde G(\alpha, \ell))\ge L_\ell(G)$. Indeed, if $S\subseteq V(G)$ is odd, then $\widetilde S:=S\times \Z_\ell$ has odd size $\ell|S|$, and $|E(\widetilde G(\alpha, \ell)[\widetilde S])|=\ell|E(G[S])|$. Since each color class contains at most $(\ell|S|-1)/2$ edges with both endpoints in $\widetilde S$, every edge-coloring of $\widetilde G(\alpha, \ell)$ requires at least 
$\left\lceil \frac{2\ell|E(G[S])|}{\ell|S|-1} \right\rceil$ colors. Together with $\chi'(\widetilde G(\alpha, \ell))\ge \Delta(G)$, this gives $\chi'(\widetilde G(\alpha, \ell))\ge L_\ell(G)$.

This leads to the following conjecture.

\begin{conjecture}\label{conj:odd-density} 
Let $\ell\ge 3$ be odd and let $G$ be a loopless graph. Then, $s_\ell(G)=L_\ell(G)$.
\end{conjecture}

\section*{Acknowledgments}
Rong Luo is partially supported by a grant from  Simons Foundation (No. 839830).

\section*{Conflict of Interest Statement}
The authors declare that they have no conflict of interest regarding the publication of this paper.

\bibliographystyle{plain}
\bibliography{Reference.bib}

@article{plachta2020,
  title={Coverings of Cubic Graphs and 3-Edge Colorability},
  author={Plachta, L.},
  journal={Discussiones Mathematicae Graph Theory},
  volume={41},
  number={1},
  pages={311--334},
  year={2020},
  publisher={Uniwersytet Zielonog{\'o}rski. Wydzia{\l} Matematyki, Informatyki i Ekonometrii}
}

@article{PISANSKI1983,
title = {Edge-colorability of graph bundles},
journal = {Journal of Combinatorial Theory, Series B},
volume = {35},
number = {1},
pages = {12-19},
year = {1983},
issn = {0095-8956},
doi = {https://doi.org/10.1016/0095-8956(83)90076-X},
url = {https://www.sciencedirect.com/science/article/pii/009589568390076X},
author = {Pisanski, T. and Shawe-Taylor, J. and Vrabec, J.},
}

@article{ALM2002,
author = {Amit, A. and Linial, N. and Matou{\v{s}}ek, J.},
title = {Random lifts of graphs: Independence and chromatic number},
journal = {Random Structures \& Algorithms},
volume = {20},
number = {1},
pages = {1-22},
doi = {https://doi.org/10.1002/rsa.10003},
url = {https://onlinelibrary.wiley.com/doi/abs/10.1002/rsa.10003},
eprint = {https://onlinelibrary.wiley.com/doi/pdf/10.1002/rsa.10003},
year = {2002}
}

@article{KIM2008,
title = {The chromatic numbers of double coverings of a graph},
journal = {Discrete Mathematics},
volume = {308},
number = {22},
pages = {5078-5086},
year = {2008},
issn = {0012-365X},
doi = {https://doi.org/10.1016/j.disc.2007.09.024},
url = {https://www.sciencedirect.com/science/article/pii/S0012365X07007509},
author = {D. Kim and J. Lee},
}

@article{SEYMOUR198182,
title = {On Tutte's extension of the four-colour problem},
journal = {Journal of Combinatorial Theory, Series B},
volume = {31},
number = {1},
pages = {82-94},
year = {1981},
issn = {0095-8956},
doi = {https://doi.org/10.1016/S0095-8956(81)80013-5},
url = {https://www.sciencedirect.com/science/article/pii/S0095895681800135},
author = {P. D. Seymour},
}

@article{seymour1990,
  title={Colouring series-parallel graphs},
  author={P. D. Seymour},
  journal={Combinatorica},
  volume={10},
  number={4},
  pages={379--392},
  year={1990},
  publisher={Springer}
}

@article{Gross1974,
  author  = {Gross, J. L.},
  title   = {Voltage graphs},
  journal = {Discrete Mathematics},
  volume  = {9},
  number  = {3},
  pages   = {239--246},
  year    = {1974},
  doi     = {10.1016/0012-365X(74)90006-5}
}

@article{GrossTucker1977,
  author  = {Gross, J. L. and Tucker, T. W.},
  title   = {Generating all graph coverings by permutation voltage assignments},
  journal = {Discrete Mathematics},
  volume  = {18},
  number  = {3},
  pages   = {273--283},
  year    = {1977},
  doi     = {10.1016/0012-365X(77)90131-5}
}

@book{GrossTucker1987,
  author    = {Gross, J. L. and Tucker, T. W.},
  title     = {Topological Graph Theory},
  publisher = {Wiley-Interscience},
  address   = {New York},
  year      = {1987}
}

@article{Waller1976,
  author  = {Waller, D. A.},
  title   = {Double covers of graphs},
  journal = {Bulletin of the Australian Mathematical Society},
  volume  = {14},
  number  = {2},
  pages   = {233--248},
  year    = {1976},
  doi     = {10.1017/S0004972700025053}
}

@article{Hofmeister1988,
  author  = {Hofmeister, M.},
  title   = {Counting double covers of graphs},
  journal = {Journal of Graph Theory},
  volume  = {12},
  number  = {3},
  pages   = {437--444},
  year    = {1988},
  doi     = {10.1002/jgt.3190120316}
}

@article{FengKutnarMalnicMarusic2008,
  author  = {Feng, Y. and Kutnar, K. and Malni{\v c}, A. and Maru{\v s}i{\v c}, D.},
  title   = {On 2-fold covers of graphs},
  journal = {Journal of Combinatorial Theory, Series B},
  volume  = {98},
  number  = {2},
  pages   = {324--341},
  year    = {2008},
  doi     = {10.1016/j.jctb.2007.07.001}
}

@article{Vizing1964,
  author  = {Vizing, V. G.},
  title   = {On an estimate of the chromatic class of a {$p$}-graph},
  journal = {Diskret. Analiz},
  volume  = {3},
  pages   = {25--30},
  year    = {1964},
  note    = {In Russian}
}

@article{Goldberg1973,
  author  = {Goldberg, M. K.},
  title   = {On multigraphs of almost maximal chromatic class},
  journal = {Diskret. Analiz},
  volume  = {23},
  pages   = {3--7},
  year    = {1973},
  note    = {In Russian}
}

@book{SSTF2012GraphEC,
  author    = {Stiebitz, M. and Scheide, D. and Toft, B. and Favrholdt, L. M.},
  title     = {Graph Edge Coloring: {Vizing's} Theorem and {Goldberg's} Conjecture},
  publisher = {John Wiley \& Sons},
  address   = {Hoboken, NJ},
  year      = {2012},
  series    = {Wiley Series in Discrete Mathematics and Optimization}
}

@misc{CHYZ2024,
      title={A short proof of the {Goldberg--Seymour} conjecture}, 
      author={G. Chen and Y. Hao and X. Yu and W. Zang},
      year={2024},
      eprint={2407.09403},
      archivePrefix={arXiv},
      primaryClass={math.CO},
      url={https://arxiv.org/abs/2407.09403}, 
      note = {\href{https://arxiv.org/abs/2407.09403}{arXiv:2407.09403}}
}

@article{CJZ2025,
  title={Proof of the {Goldberg--Seymour} conjecture on edge-colorings of multigraphs},
  author={Chen, G. and Jing, G. and Zang, W.},
  journal={Journal of Combinatorial Optimization},
  volume={50},
  number={3},
  pages={23},
  year={2025},
  publisher={Springer}
}

@article{Seymour1979,
    author = {P. D. Seymour},
    title = {On multi-colourings of cubic graphs, and conjectures of {Fulkerson} and {Tutte}},
    journal = {Proceedings of the London Mathematical Society},
    volume = {s3-38},
    number = {3},
    pages = {423--460},
    year = {1979},
    month = {05},
    issn = {0024-6115},
    doi = {10.1112/plms/s3-38.3.423},
    url = {https://doi.org/10.1112/plms/s3-38.3.423},
    eprint = {https://academic.oup.com/plms/article-pdf/s3-38/3/423/4351892/s3-38-3-423.pdf},
}

@misc{J2026,
      title={On Edge Coloring of Multigraphs}, 
      author={G. Jing},
      year={2026},
      eprint={2308.15588},
      archivePrefix={arXiv},
      primaryClass={math.CO},
      url={https://arxiv.org/abs/2308.15588}, 
      note= {arXiv: 2308.15588}
}

\end{document}